\documentclass[11pt,dvips,twoside,letterpaper]{article}
\usepackage{pslatex}
\usepackage{fancyhdr}
\usepackage{graphicx}
\usepackage{geometry}
\RequirePackage[latin1]{inputenc}
 \RequirePackage[T1]{fontenc}

\def\figurename{Figure} 
\makeatletter
\renewcommand{\fnum@figure}[1]{\figurename~\thefigure.}
\makeatother

\def\tablename{Table} 
\makeatletter
\renewcommand{\fnum@table}[1]{\tablename~\thetable.}
\makeatother

\usepackage{amsmath}
\usepackage{amssymb}
\usepackage{amsfonts}
\usepackage{amsthm,amscd}

\newtheorem{theorem}{Theorem}[section]
\newtheorem{lemma}[theorem]{Lemma}
\newtheorem{corollary}[theorem]{Corollary}
\newtheorem{proposition}[theorem]{Proposition}
\theoremstyle{definition}
\newtheorem{definition}[theorem]{Definition}

\theoremstyle{remark}
\newtheorem{remark}[theorem]{Remark}

\numberwithin{equation}{section}

\def\P{\mathbb P}

\def\R{\mathbb R}
\def\E{\mathbb E}

\def\E{\mathbb E}


\setlength{\topmargin}{-0.35in}
\setlength{\textheight}{8.5in}   
\setlength{\textwidth}{6.0in}    
\setlength{\oddsidemargin}{0.3in} \setlength{\evensidemargin}{0.3in}
\setlength{\headheight}{26pt} \setlength{\headsep}{8pt}
\thispagestyle{empty} \setcounter{page}{1}
\begin{document}
\title{ \textbf{Multivalued stochastic Dirichlet-Neumann problems and
generalized backward doubly stochastic differential
equations}\footnote{The two first authors are supported by the
National Natural Science Foundation of
 China (No. 10901003), the Great Research Project of Natural
 Science Foundation of Anhui Provincial Universities (No. KJ2010ZD02)
 and the Chinese Universities Scientific Fund
(No. BUPT2009RC0705)\newline The third author is supported by TWAS Research Grants to individuals (No. 09-100 RG/MATHS/AF/\mbox{AC-I}--UNESCO FR: 3240230311)}
}

\author{\textbf{Yong
Ren}$^1$\footnote{Corresponding author. e-mail: renyong@126.com}\;\,,\ \
\textbf{ Qing Zhou}$^2$ \footnote{ e-mail: zqleii@gmail.com}\;\,, \ \
\textbf{Auguste Aman}$^3$\footnote{e-mail: augusteaman5@yahoo.fr;\, auguste.aman@univ-cocody.ci}
\\
\small 1. Department of Mathematics, Anhui Normal University, Wuhu
241000, China\;\;\;\;\;\;\;\;\;\;\;\;\;\;\;\;\;\;\;\;\;\;\;\;\;\;\\
 \small  2. School of Science, Beijing University of Posts and Telecommunications, Beijing 100876, China   \\
\small 3. U.F.R Mathematiques et Informatique, Universit\'{e} de
Cocody, 22 BP 582 Abidjan, C\^{o}te d'Ivoire}\,
\date{}
\maketitle

\begin{abstract}
In this paper, a class of generalized backward doubly stochastic
differential equations whose coefficient contains the
subdifferential operators of two convex functions, which are also called
as generalized backward doubly stochastic variational inequalities, are
considered. By means of a penalization argument based on Yosida
approximation, we establish the existence and uniqueness of the
solution. As an application, this result is used to derive
existence result of stochastic viscosity solution for a class of
multivalued stochastic Dirichlet-Neumann problems.
\end{abstract}

\noindent {\bf 2000 MR Subject Classification} 60H15, 60H20

\vspace{.08in} \noindent \textbf{Keywords} Backward doubly
stochastic differential equation, subdifferential operator, Yosida
approximation, Neumann-Dirichlet boundary conditions, stochastic
viscosity solution

\section{Introduction}

The theory of nonlinear backward stochastic differential equations
(BSDEs, for short) was firstly developed by Pardoux and Peng
\cite{PP1}. These equations have attracted great interest due to
their connections with stochastic control, mathematical finance and
due to providing probabilistic interpretation for solutions of PDEs. One
can see  Hamed\`{e}ne and Lepeltier \cite{HL}, El Karoui et al.
\cite{El}, Peng \cite{Peng}, Ren et al. \cite{ren1,ren2} and the
references therein. Further, other settings of BSDEs have been
introduced. Especially, Gegout-Petit and Pardoux \cite{Geg}
introduced a class of BSDEs related to a multivalued maximal
monotone operator defined by the subdifferential operator of a
convex function. In addition, Pardoux and R\u{a}\c{s}canu \cite{PR}
proved the existence and uniqueness of the solution of BSDEs, on a
random (possibly infinite) time interval, involving a
subdifferential operator in order to give a probabilistic
interpretation for the viscosity solution of some parabolic and
elliptic variational inequalities. Its extension to the probabilistic
interpretation of the viscosity solution of the parabolic
variational inequality (PVI, for short) with a mixed nonlinear
multivalued Neumann-Dirichlet boundary condition was recently given
in Maticiuc and R\u{a}\c{s}canu \cite{MR}.

Another class of BSDEs, named backward doubly stochastic differential equations (BDSDEs, for short) involving a standard
forward stochastic integral and a backward stochastic integral has been proposed by Pardoux and Peng \cite{PP}. They derive
existence and uniqueness result under global Lipschitz assumptions on the coefficients and use it under stronger assumptions
(coefficients are $C^3$) to give a probabilistic representation for a class of quasilinear stochastic partial differential equations (SPDEs, for short). Furthermore, Buckdahn and Ma \cite{BM1,BM2,BMa} improve this representation introducing the viscosity solution of semi-linear SPDEs. This viscosity solution is been extended to semi-linear SPDE with a Neumann boundary in \cite{Bal} by means of a class of generalized backward doubly stochastic differential equations (GBDSDEs, for short). On the other hand, Boufoussi and Mrhardy \cite{BM} derive the existence result to stochastic viscosity solution for some multivalued SPDE using it connection with
BDSDEs whose coefficient contains the subdifferential of a convex function. More recently, the scalar GBDSDE with one-sided reflection which provides a probabilistic representation for the stochastic viscosity solution of an obstacle problem for a nonlinear stochastic parabolic PDE was considered by Aman and Mrhardy in \cite{AM}.

Motivated by the above works, the purpose of the present paper is to consider the following generalized backward doubly stochastic
differential equation, whose coefficient contains the subdifferential operators of two convex functions, also called generalized  backward
doubly stochastic variational inequality (BDSGVI, for short). Precisely, we have
\begin{eqnarray}\label{equation1}
\left\{
\begin{array}{ll}
\,{\rm d}Y_t+f(t,Y_t,Z_t)\,{\rm d}t+g(t,Y_t)\,{\rm
d}A_t+h(t,Y_t,Z_t)\,{\rm d}B_t
\\\\~~~~~~~~~~~~~~~~~~~~~~~~\in \partial
\varphi(Y_t)\,{\rm d}t+\partial \psi(Y_t)\,{\rm d}A_t+Z_t\,{\rm
d}W_t
,\ 0\leq t\leq T, \\
Y_T=\xi,
\end{array}\right.
\end{eqnarray}
where $(A_t)_{t\geq 0}$ is a one-dimensional continuous increasing
$\mathcal{F}_t$-progressively measurable process, $\partial \varphi$
and $\partial \psi$ are two subdifferential operators. The
integral with respect to $\{B_t\}$ is a backward Kunita-It\^{o}
integral (see Kunita \cite{Kunita}) and this one with respect to
$\{W_t\}$ is a standard forward It\^{o} integral (see Gong
\cite{Gong}). It is actually a class of GBDSDEs, which involves two
subdifferential operators of two convex functions. Let us recall that \eqref{i1} has been studied, in the case $h=0$, in \cite{MR}.

We have two goals. First, under Lipschitz conditions on $f,\, g$ and $h$, we derive an existence and uniqueness
result to BDSGVI \eqref{equation1} by means of the Yosida approximation. Next, we naturally establish the connection between solution of \eqref{equation1} and the stochastic viscosity solution of the following stochastic PVI (SPVI, for short) with a mixed nonlinear multivalued Neumann-Dirichlet boundary condition:
\begin{eqnarray}
\left\{
\begin{array}{ll}
\Big(\frac{\partial u}{\partial t}(t,x)+Lu(t,x)+f(t,x,u(t,x),(\nabla u\sigma)(t,x))\\
~~~~~~~~~~~~~~~-h(t,x,u(t,x))\dot{B}_t\Big)\in\partial \varphi(u(t,x)),\;\; \ (t,x)\in [0,T]\times \Theta,\\
\frac{\partial u}{\partial n}(t,x)+g(t,x,u(t,x))\in\partial \psi(u(t,x)), \
(t,x)\in [0,T]\times Bd(\Theta),\\\\
u(T,x)=\chi(x),\;\;x\in\overline{\Theta}.
\end{array}\right.
\label{i1}
\end{eqnarray}
Here $\dot{B}$ denotes the white noise and, thus, indicates that the differential is to be understood in It\^{o}'s backward integral
sense with respect to Brownian motion $B$ and $f,\, h,\, g $ and $\chi$ are some measurable functions with appropriate dimensions. Moreover, $L$ and $\frac{\partial}{\partial n}$ denote the infinitesimal generator of some reflected diffusion process and are defined by
\begin{eqnarray*}
L=\frac{1}{2}\sum_{i,j=1}^{d}(\sigma(x)\sigma^{*}(x))_{i,j}
\frac{\partial^{2}}{\partial x_{i}\partial
x_{j}}+\sum^{d}_{i=1}b_{i}(x)\frac{\partial}{\partial x_{i}},\quad
\forall\, x\in\Theta,
\end{eqnarray*}
and
\begin{eqnarray*}
\frac{\partial}{\partial n}=\sum_{i=1}^{d}\frac{\partial\phi(x) }{\partial
x_{i}}(x)\frac{\partial}{\partial x_{i}},\quad \forall\,
x\in\partial\Theta,
\end{eqnarray*}
where the function $\phi$ is linked to a connected bounded domain $\Theta$ as defined in \cite{LZ}. Let us mention that \eqref{i1} is the generalization of the existence result (and not of the uniqueness) for the solution of the equation studied in \cite{MR}, where $h=0$ and $f, g$ are not random. As in \cite{BMa} or \cite{BM}, we shall define stochastic viscosity solution for SPVI \eqref{i1} by using the notion of stochastic sub- and super-jets. But the novelty lies in adding Stieltjes integrals with respect the process $A$ which allows us to give  representation formula (Feynman-Kac formula) for solution of the stochastic multivalued Neumann-Dirichlet problems. Therefore in our mind, the results of this article is a non trivial generalization of the work from \cite{BM} and hence one appear in \cite{BMa}.

The rest of this paper is organized as follows. In Section 2, we give some preliminaries and notations. Section 3 is devoted to prove the existence and uniqueness of a solution for the BDSGVI \eqref{equation1}. As application, we derive the notion of stochastic viscosity solution for a class of multivalued stochastic Dirichlet-Neumann problems and then prove its existence in the last section.

\section{ Preliminaries and notations}
In the sequel, let $T>0$ be a fixed terminal time, $\{B_t:t\in
[0,T]\}$ and $\{W_t:t\in [0,T]\}$ be two independent $d$-dimensional
Brownian motions ($d\geq 1$), defined on the complete probability
$(\Omega,\mathcal{F},\P)$ and $(\Omega',\mathcal{F}',\P')$
respectively, and $(A_t)_{t\geq 0}$ be a one-dimensional continuous
increasing measurable stochastic process (m.s.p.,
for short). Let us consider the product space $(\bar{\Omega},
\bar{\mathcal{F}}, \bar{\P}),$ where
\begin{eqnarray*}
\bar{\Omega}=\Omega\otimes\Omega',\
\bar{\mathcal{F}}=\mathcal{F}\otimes\mathcal{F}',\
\bar{\P}=\P\otimes\P',
\end{eqnarray*}
and let $\mathcal{N}$ denote the totality of $\bar{\mathbb{P}}$-null sets
of $\bar{\mathcal{F}}$. For each $t\in [0,T]$, we define
\begin{eqnarray*}
\mathcal{F}_t=\mathcal {F}_{t,T}^B\vee \mathcal {F}_{t}^W\vee
\mathcal{N},
\end{eqnarray*}
where for any process $\{\eta_t\}, \mathcal
{F}_{s,t}^\eta=\sigma\{\eta_r-\eta_s:s\leq r\leq t\},\mathcal {F}_{t}^\eta=\mathcal {F}_{0,t}^\eta.$ We note
that ${\bf F}= \{\mathcal{F}_{t},\ t\in [0,T]\}$ is neither
increasing nor decreasing so that it does not constitute a
filtration. Further, we assume that random variables
$\xi(\omega),\;\omega\in \Omega$ and $\zeta(\omega'),\; \omega'\in
\Omega'$ are considered as random variables on $\bar{\Omega} $ via
the following identifications:
\begin{eqnarray*}
\xi(\omega,\omega')=\xi(\omega),\
\zeta(\omega,\omega')=\zeta(\omega').
\end{eqnarray*}
In what follows, we will work under the following spaces as defined in \cite{MR}. For the
positive constants $\lambda$ and $\mu$,
\begin{itemize}
\item $\mathcal{M}_k^{\lambda,\mu}$ denotes the space of ${\bf F}$-jointly measurable stochastic process (j.m.s.p, in short) $\gamma:\Omega
\times [0,T]\to \mathbb{R}^k$ such that
  $$\|\gamma\|_{\mathcal{M}}^2=\mathbb{E}\int_0^T{\rm e}^{\lambda t+\mu A_t}|\gamma(t)|^2\,{\rm d}t<\infty.$$
\item $\bar {\mathcal{M}}_k^{\lambda,\mu}$ denotes the space of j.m.s.p. $\gamma:\Omega
\times [0,T]\to \mathbb{R}^k$ such that
 $$\|\gamma\|_{\bar{\mathcal{M}}}^2=\mathbb{E}\int_0^T{\rm e}^{\lambda t+\mu A_t}|\gamma(t)|^2\,{\rm d}A_t<\infty.$$

\item $\mathcal{S}_k^{\lambda,\mu}$ denotes the space of continuous j.m.s.p. $\gamma:\Omega
\times [0,T]\to \mathbb{R}^k$ such that
  $$\|\gamma\|_{\mathcal{S}}^2=\mathbb{E}\left(\sup_{0\leq t\leq T}{\rm e}^{\lambda t+\mu A_t}|\gamma(t)|^2\right)<\infty.$$
\end{itemize}
Now, we give the following assumptions:
\begin{itemize}
 \item[(H1)] The ${\bf F}^{B}$-adapted functions $f:[0,T]\times \Omega \times \mathbb{R}^k\times
 \mathbb{R}^{k\times d}
 \to \mathbb{R}^k,\ h:[0,T]\times \Omega \times \mathbb{R}^k\times
 \mathbb{R}^{k\times d}
 \to \mathbb{R}^{k\times d}$ and $g:[0,T]\times \Omega \times \mathbb{R}^k
 \to \mathbb{R}^{k}$
satisfy, for some constants $\beta_1,\ \beta_2 \in \mathbb{R},\
K>0,\ 0<\alpha<1,$ three ${\bf F}^{B}$-adapted processes
$\{f_t,g_t,h_t:0\leq t \leq T\}$, and for all $t\in [0,T], y,y' \in
\mathbb{R}^k $ and $z,z'\in \mathbb{R}^{k\times d}, $
\begin{itemize}
\item[(i)] $f(\cdot,\cdot,y,z), \ h(\cdot,\cdot,y,z)$ and $g(\cdot,\cdot,y)$ are p.m.s.p.;

\item[(ii)] $|f(t,y,z)|\leq f_t+K(|y|+\|z\|)$,\;\;  $\|h(t,y,z)\|\leq h_t+K(|y|+\|z\|),$\;\; $|g(t,y)|\leq g_t+K|y|;$

\item[(iii)] $f_t, h_t \in \mathcal{M}_k^{\lambda,\mu}$ and $g_t \in
\bar{\mathcal{M}}_k^{\lambda,\mu};$

\item[(iv)] $\left<y-y^\prime,f(t,y,z)-f(t,y^\prime,z)\right>\leq
\beta_1|y-y^\prime|^2$;

\item[(v)] $|f(t,y,z)-f(t,y,z^\prime)|\leq K\|z-z^\prime\|$;

\item[(vi)]  $\left<y-y^\prime, g(t,y)-g(t,y^\prime)\right>\leq
\beta_2|y-y^\prime|^2$;

\item[(vii)] $\|h(t,y,z)-h(t,y^\prime,z^\prime)\|^2\leq
K|y-y^\prime|^2+\alpha\|z-z^\prime\|^2$;

\item[(viii)] $y\mapsto (f(t,y,z),h(t,y,z),g(t,y))$ is continuous for all
$z, (t,\omega)$ a.e.

\end{itemize}

 \item[(H2)] The two functions $\varphi$ and $\psi$ satisfy
 \begin{itemize}
\item [(i)] $\varphi, \psi:\mathbb{R}^k\rightarrow (-\infty,+\infty]$ are proper
($\neq\infty$), convex, and lower semi continuous (l.s.c., for
short),
\item [(ii)] $\varphi(y)\geq \varphi(0)=0,\ \psi(y)\geq \psi(0)=0 $.

\end{itemize}
\item[(H3)] The terminal value $\xi$ is an $\mathbb{R}^k$-valued
 $\mathcal{F}_T$-measurable random variable. Moreover we have the following: For each  $\lambda>0,\mu>0$, satisfying $\lambda>2+2(\beta_1+\beta_2)+\frac{K(3-\alpha+2K)}{1-\alpha}$ and $\mu>1+2\beta_2$,
\begin{eqnarray*}
\Lambda:=\mathbb{E}\left\{{\rm e}^{\lambda T+\mu A_T}\left(|\xi|^2+\varphi(\xi)+\psi(\xi)\right)+\int_0^T
{\rm e}^{\lambda t+\mu A_t}\left[\left(|f_t|^2+|h_t|^2\right)\,{\rm
d}t+|g_t|^2\,{\rm d}A_t\right]\right\}<\infty.\label{esti}
\end{eqnarray*}
\end{itemize}

For $\theta$ equal to $\varphi$ or $\psi$, let us define
\begin{eqnarray*}
\begin{array}{l}
{\rm Dom}(\theta)=\left \{u\in \mathbb{R}^k :\theta(u)<+\infty
\right\},\\\\
\partial \theta(u)= \{u^\ast\in
\mathbb{R}^k:\left<u^\ast,v-u\right>+\theta(u)\leq \theta(v),\
\mbox{for all}\ v\in \mathbb{R}^k\},\\\\
{\rm Dom}(\partial \theta)= \{u\in \mathbb{R}^k:\partial \theta
\neq \emptyset\},\\\\
(u,u^*)\in\partial \theta \Leftrightarrow u\in {\rm Dom} (\partial \theta), u^\ast \in\partial\theta(u).
\end{array}
\end{eqnarray*}
It is well known that the subdifferential operator $\partial\theta$ is a maximal monotone
operator, which means that
\begin{eqnarray*}
\langle u-u^*,v-v^*\rangle\geq 0,\;\; \forall\; (u,u^*),\,(v,v^*)\in\partial\theta.
\end{eqnarray*}
\begin{remark}
Assumption (H2-$ii$) is not a restriction since we can replace $\varphi(u)$ (resp.$\;\psi(u)$) by $\varphi(u+u_0)-\varphi(u_0)-\langle u^{*}_0, u\rangle$ (resp.$\;\psi(u+u_0)-\psi(u_0)-\langle u^{*}_0, u\rangle$) where
$(u_0; u^{*}_{0})\in\partial\varphi$ (resp. $\;(u_0; u^{*}_{0})\in\partial\psi$).
\end{remark}
To end this section, let us introduce the following needed classical
Yosida approximation of the subdifferential operator $\partial
\theta$ equal to $\partial\varphi$ or $\partial\psi$. For
$\varepsilon>0$, we define (see \cite{Br} and the references
therein)
\begin{eqnarray*}
\theta_\varepsilon(x)=\min_{y}\left(\frac{1}{2}|x-y|^2+\varepsilon\theta(y)\right)=
\frac{1}{2}\left|x-J_\varepsilon(x)\right|^2 +\varepsilon\theta(J_\varepsilon(x)),
\end{eqnarray*}
where $J_\varepsilon(x)=(I+\varepsilon\partial \theta)^{-1}(x)$ is called the resolvent of the monotone
operator $\partial \theta$. Next, on can show that
\begin{eqnarray*}
\nabla\theta_{\varepsilon}(x)=\frac{x-J_{\varepsilon}(x)}{\varepsilon},
\end{eqnarray*}
and $x\mapsto\nabla\theta_{\varepsilon}(x)$ is a monotone Lipschitz function.\newline
Now, let us give the compatibility assumptions, which appear for the first time in \cite{MR}:
\begin{itemize}
\item [(H4)](compatibility assumptions)\newline  For all $ y\in \mathbb{R}^k, z\in\mathbb{R}^{k\times
  d},\varepsilon>0 $ and $0\leq t\leq T,$
\begin{itemize}
\item[(i)] $\left<\nabla\varphi_\varepsilon(y),\nabla\psi_\varepsilon(y)\right>\geq
  0;$
\item [(ii)] $\left<\nabla\varphi_\varepsilon(y),g(t,y)\right>\leq
\left<\nabla\psi_\varepsilon(y),g(t,y)\right>^+;$
\item[(iii)] $\left<\nabla\psi_\varepsilon(y),f(t,y,z)\right> \leq
\left<\nabla\varphi_\varepsilon(y),f(t,y,z)\right>^+,$
 where $a^+=\max\{0,a\}.$
\end{itemize}
\end{itemize}
Recall again that $\theta$ is equal to $\varphi$ or $\psi$, we have (see \cite{Br} or \cite{PR} ).
\begin{proposition}\label{pro2.1}
\begin{enumerate}
\item[\rm (1)] $\theta_\varepsilon$ is a
convex function with Lipschitz continuous derivatives;
\item[\rm (2)] for all $ x\in \mathbb{R}^k,
\nabla\theta_\varepsilon(x)=\partial
\theta_\varepsilon(x)=\frac{1}{\varepsilon}\left(x-J_\varepsilon(x)\right)\in
\partial \theta(J_\varepsilon(x));$\\
\item[\rm (3)] for all $ x,y\in \mathbb{R}^k,
|\nabla\theta_\varepsilon(x)-\nabla\theta_\varepsilon(y)|\leq
\frac{1}{\varepsilon}
|x-y|;$\\
\item[\rm (4)] for all $ x,y\in \mathbb{R}^k,
\left<\nabla\theta_\varepsilon(x)-\nabla\theta_\varepsilon(y),x-y\right>\geq
0;$\\
\item[\rm (5)] for all $ x,y\in \mathbb{R}^k\ \mbox{and} \
\varepsilon,\delta>0,$
$\left<\nabla\theta_\varepsilon(x)-\nabla\theta_\delta(y),
x-y\right>\geq-\left(\varepsilon+\delta\right)\left<\nabla\theta_\varepsilon(x),\nabla\theta_\delta(y)\right>.
$
\end{enumerate}
\end{proposition}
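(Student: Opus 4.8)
The plan is to deduce all five items from two standard facts: the resolvent $J_\varepsilon=(I+\varepsilon\partial\theta)^{-1}$ is an everywhere-defined single-valued map, and $\partial\theta$ is monotone. Since $\partial\theta$ is maximal monotone on $\mathbb{R}^k$, Minty's theorem makes $I+\varepsilon\partial\theta$ surjective, and strong monotonicity makes it injective, so $J_\varepsilon$ is well defined on all of $\mathbb{R}^k$; writing $u=J_\varepsilon(x)$, the relation $x\in u+\varepsilon\partial\theta(u)$ gives at once $\tfrac1\varepsilon(x-u)\in\partial\theta(u)$. The functional minimized in the definition of $\theta_\varepsilon$ is strictly convex and coercive in $y$, hence has a unique minimizer, characterized by its first-order optimality condition $\tfrac1\varepsilon(x-y)\in\partial\theta(y)$, i.e.\ $y=J_\varepsilon(x)$; this yields the closed form of $\theta_\varepsilon$. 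Recognizing $\theta_\varepsilon$ as an infimal convolution of $\theta$ with a smooth strictly convex quadratic, I would then invoke the classical fact that such a convolution is everywhere differentiable, with gradient equal to the gradient of the quadratic summand evaluated at the optimal splitting; this identifies $\nabla\theta_\varepsilon(x)=\partial\theta_\varepsilon(x)=\tfrac1\varepsilon(x-J_\varepsilon(x))\in\partial\theta(J_\varepsilon(x))$ and establishes (2). One should keep track of the normalization of the quadratic penalty so that the scalings in the closed form of $\theta_\varepsilon$, in $\nabla\theta_\varepsilon$, and in (5) are mutually consistent.

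For (1), convexity of $\theta_\varepsilon$ is the usual partial-minimization-of-a-jointly-convex-function argument, and the Lipschitz regularity of its derivative is precisely (3). To prove (3), put $u=J_\varepsilon(x)$ and $v=J_\varepsilon(y)$; monotonicity of $\partial\theta$ applied to $\tfrac1\varepsilon(x-u)\in\partial\theta(u)$ and $\tfrac1\varepsilon(y-v)\in\partial\theta(v)$ yields $\langle x-y,\,u-v\rangle\ge|u-v|^2$, whence
\[
|(x-y)-(u-v)|^2=|x-y|^2-2\langle x-y,\,u-v\rangle+|u-v|^2\le|x-y|^2-|u-v|^2\le|x-y|^2 ;
\]
since $\nabla\theta_\varepsilon(x)-\nabla\theta_\varepsilon(y)=\tfrac1\varepsilon\bigl[(x-y)-(u-v)\bigr]$, this is the estimate in (3).

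Finally, (4) and (5) come out together from the identities $x=J_\varepsilon(x)+\varepsilon\nabla\theta_\varepsilon(x)$ and $y=J_\delta(y)+\delta\nabla\theta_\delta(y)$ furnished by (2). Set $p=\nabla\theta_\varepsilon(x)\in\partial\theta(J_\varepsilon(x))$ and $q=\nabla\theta_\delta(y)\in\partial\theta(J_\delta(y))$; monotonicity of $\partial\theta$ between $(J_\varepsilon(x),p)$ and $(J_\delta(y),q)$ gives $\langle p-q,\,J_\varepsilon(x)-J_\delta(y)\rangle\ge0$, and substituting $J_\varepsilon(x)-J_\delta(y)=(x-y)-(\varepsilon p-\delta q)$ and expanding,
\[
\langle p-q,\,x-y\rangle\ \ge\ \langle p-q,\,\varepsilon p-\delta q\rangle\ =\ \varepsilon|p|^2+\delta|q|^2-(\varepsilon+\delta)\langle p,q\rangle\ \ge\ -(\varepsilon+\delta)\langle p,q\rangle ,
\]
which is (5); taking $\varepsilon=\delta$ the middle equality reads $\varepsilon|p-q|^2\ge0$, which is (4). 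The only genuinely non-mechanical step is the differentiability assertion inside (2) — that $\partial\theta_\varepsilon$ is single-valued and equals $\tfrac1\varepsilon(x-J_\varepsilon(\cdot))$ — which I would settle either by the infimal-convolution fact quoted above, or, for a self-contained argument, by checking directly that $\tfrac1\varepsilon(x-J_\varepsilon(x))$ lies in $\partial\theta_\varepsilon(x)$ (a short computation using $p\in\partial\theta(J_\varepsilon(x))$, in which the subgradient gap turns out nonnegative) together with single-valuedness of $\partial\theta_\varepsilon$, e.g.\ because $(\theta_\varepsilon)^{*}=\theta^{*}+\tfrac\varepsilon2|\cdot|^2$ is strictly convex; the remaining items are then routine algebra with the monotonicity inequality.
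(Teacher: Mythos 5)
Your proof is correct, and it is essentially the argument the cited sources give; the paper itself supplies no proof of Proposition~\ref{pro2.1}, merely quoting it from Brezis \cite{Br} and Pardoux--R\u{a}\c{s}canu \cite{PR}, where the same chain appears: Minty's theorem to make $J_\varepsilon$ everywhere defined and single-valued, the first-order optimality condition identifying the minimizer with $J_\varepsilon(x)$ and the gradient with $\tfrac1\varepsilon\bigl(x-J_\varepsilon(x)\bigr)\in\partial\theta(J_\varepsilon(x))$, firm nonexpansiveness of $J_\varepsilon$ for (3), and the two monotonicity expansions for (4) and (5). Your caveat about normalization is well taken and worth recording: as printed, the paper's definition $\theta_\varepsilon(x)=\min_y\bigl(\tfrac12|x-y|^2+\varepsilon\theta(y)\bigr)$ is $\varepsilon$ times the usual Moreau--Yosida envelope and would have gradient $x-J_\varepsilon(x)$, whereas the formulas asserted in items (2)--(5) are the ones consistent with the standard normalization $\min_y\bigl(\tfrac1{2\varepsilon}|x-y|^2+\theta(y)\bigr)$, which is the convention your argument (correctly) adopts.
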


\section{Existence and uniqueness result to BDSGVI}
This section aims to derive the existence and uniqueness result to
BDSGVI \eqref{equation1}. They are obtained via Yosida
approximations. First of all, let us introduce the adapted definition of solution from \cite{MR} to our BDSGVI.
\begin{definition}\rm \label{def2.1}
The processes $(Y,U,V,Z)$ is called a solution of BDSGVI \eqref{equation1} if the
following conditions are satisfied:
\begin{itemize}
  \item[(1)] $\displaystyle Y\in \mathcal{S}_k^{\lambda,\mu}\cap \mathcal{M}_k^{\lambda,\mu}
  \cap  \bar{\mathcal{M}}_k^{\lambda,\mu},\ Z\in
 \mathcal{M}_{k\times d}^{\lambda,\mu};$
  \item[(2)] $\displaystyle U\in \mathcal{M}_k^{\lambda,\mu}
  ,\ V\in
\bar{ \mathcal{M}}_{k}^{\lambda,\mu};$

\item[(3)] $\displaystyle \mathbb{E}\int_0^T{\rm e}^
{\lambda t+\mu A_t}\left[\varphi(Y_t)\,{\rm d}t+\psi(Y_t)\,{\rm
d}A_t\right]<\infty;$

   \item[(4)]$\displaystyle (Y_t,U_t)\in \partial \varphi, \ \,{\rm d}\overline{\mathbb{P}}\otimes
  \,{\rm d}t, \ (Y_t,V_t)\in \partial \psi, \ \,{\rm d}\overline{\mathbb{P}}\otimes
  dA_t(\bar{\omega})
  \mbox{-a.e. on}\ [0,T];$\\
  \item[(5)] $\displaystyle Y_t+\int_t^TU_s\,{\rm
d}s+\int_t^TV_s\,{\rm d}A_s =\xi+\int_t^Tf(s,Y_s,Z_s)\,{\rm
d}s+\int_t^Tg(s,Y_s)\,{\rm
d}A_s$$$~~~~~~~~~~~~~~~~~~~~~~~~~~~~~+\int_t^Th(s,Y_s,Z_s)\,{\rm
d}B_s-\int_t^TZ_s\,{\rm d}W_s,\ 0\leq t \leq T. $$
\end{itemize}
\end{definition}
Since our method is based on the Yosida approximations, let us
consider the following GBDSDE:
\begin{eqnarray}\label{eq1}Y_t^\varepsilon
+\int_t^T\nabla \varphi_\varepsilon(Y_s^\varepsilon)\,{\rm d}s
+\int_t^T\nabla \psi_\varepsilon(Y_s^\varepsilon)\,{\rm d}A_s
&=&\xi+\int_t^Tf(s,Y_s^\varepsilon,Z_s^\varepsilon)\,{\rm d}s
+\int_t^Tg(s,Y_s^\varepsilon)\,{\rm d}A_s\nonumber\\
&&+\int_t^Th(s,Y_s^\varepsilon,Z_s^\varepsilon)\,{\rm
d}B_s-\int_t^TZ_s^\varepsilon\,{\rm d}W_s.
\end{eqnarray}
 Since
$\nabla\varphi_\varepsilon $ and $\nabla\psi_\varepsilon$ are
Lipschitz continuous, it is known from a recent result of Boufoussi
et al. \cite{Bal}, that GBDSDE\eqref{eq1} has a unique solution $
(Y^\varepsilon,Z^\varepsilon)\in
 \left(\mathcal{S}_k^{\lambda,\mu} \cap\mathcal{M}_k^{\lambda,\mu}
 \cap  \bar{\mathcal{M}}_k^{\lambda,\mu}\right)\times
 \mathcal{M}_{k\times d}^{\lambda,\mu}$.

Setting $$(U_t^\varepsilon,V_t^\varepsilon)=(\nabla
\varphi_\varepsilon(Y_t^\varepsilon),\nabla
\psi_\varepsilon(Y_t^\varepsilon)), \ 0\leq t \leq T,$$ we shall prove the convergence of the sequence
$(Y^\varepsilon,U^\varepsilon,V^\varepsilon,Z^\varepsilon)$ to a
process $(Y,U,V,Z)$, which is the desired solution of the BDSGVI
\eqref{equation1}.

The principal result of this section is the following theorem. We would like to point out that the proofs of Lemma \ref{lemma1}, Lemma
\ref{lemma2}, Lemma \ref{lemma3} and Theorem \ref{thm3.1} are generalizations of the results from \cite{MR}.
For the reading convenience, we give the detailed calculations.
\begin{theorem}\label{thm3.1}
Assume the assumptions of \ {\rm(H1)--(H4)} hold. Then, the BDSGVI
\eqref{equation1} has a unique solution.
\end{theorem}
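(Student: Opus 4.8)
The plan is to construct a solution of \eqref{equation1} as the limit of the penalized sequence $(Y^\varepsilon,U^\varepsilon,V^\varepsilon,Z^\varepsilon)$ introduced above, following the scheme of Maticiuc and R\u{a}\c{s}canu \cite{MR} but carrying the backward It\^o integral with respect to $B$ through every estimate, and then to obtain uniqueness by a direct monotonicity argument. \textbf{Step 1 (uniform a priori estimates, Lemma~\ref{lemma1}).} Apply the It\^o formula to $\mathrm{e}^{\lambda t+\mu A_t}|Y_t^\varepsilon|^2$. The backward integration by parts for the $B$-integral produces the extra term $\int_t^T\mathrm{e}^{\lambda s+\mu A_s}\|h(s,Y_s^\varepsilon,Z_s^\varepsilon)\|^2\,\mathrm{d}s$; using the growth and one-sided Lipschitz bounds (H1)(ii),(iv),(vi),(vii), the positivity $\langle y,\nabla\varphi_\varepsilon(y)\rangle\geq0$, $\langle y,\nabla\psi_\varepsilon(y)\rangle\geq0$ from Proposition~\ref{pro2.1}(2) with (H2)(ii), and the lower bounds on $\lambda,\mu$ from (H3) — chosen exactly to absorb the bad constants, in particular the factor $1/(1-\alpha)$ coming from $\alpha<1$ — I get bounds on $\|Y^\varepsilon\|_{\mathcal{S}}$, $\|Y^\varepsilon\|_{\mathcal{M}}$, $\|Y^\varepsilon\|_{\bar{\mathcal{M}}}$, $\|Z^\varepsilon\|_{\mathcal{M}}$ uniform in $\varepsilon$, together with a uniform bound on $\mathbb{E}\int_0^T\mathrm{e}^{\lambda t+\mu A_t}\bigl(\varepsilon\varphi_\varepsilon(Y_t^\varepsilon)\,\mathrm{d}t+\varepsilon\psi_\varepsilon(Y_t^\varepsilon)\,\mathrm{d}A_t\bigr)$.

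\textbf{Step 2 (bounds on the subdifferential terms, Lemma~\ref{lemma2}).} Apply the It\^o formula to $\mathrm{e}^{\lambda t+\mu A_t}\bigl(\varphi_\varepsilon(Y_t^\varepsilon)+\psi_\varepsilon(Y_t^\varepsilon)\bigr)$, which is legitimate since $\varphi_\varepsilon,\psi_\varepsilon$ are $C^1$ with Lipschitz gradient. The compatibility conditions (H4)(i)--(iii) are precisely what controls the cross terms $\langle\nabla\varphi_\varepsilon(Y^\varepsilon),\nabla\psi_\varepsilon(Y^\varepsilon)\rangle$, $\langle\nabla\varphi_\varepsilon(Y^\varepsilon),g(\cdot,Y^\varepsilon)\rangle$ and $\langle\nabla\psi_\varepsilon(Y^\varepsilon),f(\cdot,Y^\varepsilon,Z^\varepsilon)\rangle$ that arise when the $\mathrm{d}t$- and $\mathrm{d}A_t$-subdifferentials are mixed; combined with Step~1 this yields uniform bounds on $\|U^\varepsilon\|_{\mathcal{M}}$ and $\|V^\varepsilon\|_{\bar{\mathcal{M}}}$, and hence $\mathbb{E}\int_0^T\mathrm{e}^{\lambda s+\mu A_s}|Y_s^\varepsilon-J_\varepsilon(Y_s^\varepsilon)|^2(\mathrm{d}s+\mathrm{d}A_s)=\mathbb{E}\int_0^T\mathrm{e}^{\lambda s+\mu A_s}\bigl(\varepsilon^2|U_s^\varepsilon|^2\,\mathrm{d}s+\varepsilon^2|V_s^\varepsilon|^2\,\mathrm{d}A_s\bigr)\to0$. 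This step is the main obstacle: the two subdifferentials must be bounded \emph{simultaneously} and uniformly in $\varepsilon$, one of them living against $\mathrm{d}t$ and the other against $\mathrm{d}A_t$, so the It\^o expansion mixes $\nabla\varphi_\varepsilon$ with $g$, $\nabla\psi_\varepsilon$ with $f$, and $\nabla\varphi_\varepsilon$ with $\nabla\psi_\varepsilon$ in a way that cannot be signed without (H4), while the backward $B$-integral adds a $\|h\|^2$ term that must also be absorbed; the rest is a careful doubly-stochastic adaptation of \cite{MR}.

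\textbf{Step 3 (convergence and identification, Lemma~\ref{lemma3} and conclusion).} For $\varepsilon,\delta>0$ apply It\^o to $\mathrm{e}^{\lambda t+\mu A_t}|Y_t^\varepsilon-Y_t^\delta|^2$: the terms $\langle\nabla\varphi_\varepsilon(Y^\varepsilon)-\nabla\varphi_\delta(Y^\delta),Y^\varepsilon-Y^\delta\rangle$ and its $\psi$-analogue are bounded below by $-(\varepsilon+\delta)\langle\nabla\varphi_\varepsilon(Y^\varepsilon),\nabla\varphi_\delta(Y^\delta)\rangle$ via Proposition~\ref{pro2.1}(5), whose right-hand side is controlled by Step~2, and the $f,g,h$ contributions are handled as in Step~1; a Gronwall plus Burkholder--Davis--Gundy argument then gives $\|Y^\varepsilon-Y^\delta\|_{\mathcal{S}}^2+\|Z^\varepsilon-Z^\delta\|_{\mathcal{M}}^2\to0$. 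Hence $(Y^\varepsilon,Z^\varepsilon)\to(Y,Z)$ strongly in $\mathcal{S}_k^{\lambda,\mu}\times\mathcal{M}_{k\times d}^{\lambda,\mu}$ (and in $\mathcal{M}_k^{\lambda,\mu}\cap\bar{\mathcal{M}}_k^{\lambda,\mu}$), while along a subsequence $U^\varepsilon\rightharpoonup U$ in $\mathcal{M}_k^{\lambda,\mu}$ and $V^\varepsilon\rightharpoonup V$ in $\bar{\mathcal{M}}_k^{\lambda,\mu}$. Passing to the limit in \eqref{eq1} — using (H1)(viii) and dominated convergence for the drift terms, and the strong convergence of $Y^\varepsilon,Z^\varepsilon$ with (vii) for the $B$- and $W$-integrals — gives relation (5) of Definition~\ref{def2.1}; since $J_\varepsilon(Y^\varepsilon)\to Y$ strongly (Step~2) and $(J_\varepsilon(Y_t^\varepsilon),\nabla\varphi_\varepsilon(Y_t^\varepsilon))\in\partial\varphi$ with $\nabla\varphi_\varepsilon(Y^\varepsilon)=U^\varepsilon\rightharpoonup U$, the demiclosedness of the graph of the maximal monotone operator $\partial\varphi$ yields $(Y_t,U_t)\in\partial\varphi$, $\mathrm{d}\bar{\mathbb{P}}\otimes\mathrm{d}t$-a.e., and similarly $(Y_t,V_t)\in\partial\psi$, $\mathrm{d}\bar{\mathbb{P}}\otimes\mathrm{d}A_t$-a.e.; Fatou together with the lower semicontinuity in (H2)(i) gives condition (3). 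This proves existence.

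\textbf{Step 4 (uniqueness).} If $(Y,U,V,Z)$ and $(\tilde Y,\tilde U,\tilde V,\tilde Z)$ both solve \eqref{equation1}, apply It\^o to $\mathrm{e}^{\lambda t+\mu A_t}|Y_t-\tilde Y_t|^2$: monotonicity of $\partial\varphi$ and $\partial\psi$ gives $\langle Y_t-\tilde Y_t,U_t-\tilde U_t\rangle\geq0$ and $\langle Y_t-\tilde Y_t,V_t-\tilde V_t\rangle\geq0$, the bounds (iv),(vi),(vii) with $\alpha<1$ and the choice of $\lambda,\mu$ absorb the remaining terms, and the stochastic integrals have zero expectation, forcing $Y=\tilde Y$ and $Z=\tilde Z$; subtracting the two equations then yields $\int_t^T(U_s-\tilde U_s)\,\mathrm{d}s+\int_t^T(V_s-\tilde V_s)\,\mathrm{d}A_s=0$ for all $t\in[0,T]$, whence $U=\tilde U$ and $V=\tilde V$. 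This completes the proof.
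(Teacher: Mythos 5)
Your proposal is correct and follows essentially the same route as the paper: Yosida penalization, the same three a priori estimates (with (H4) controlling the mixed $\nabla\varphi_\varepsilon$, $\nabla\psi_\varepsilon$, $f$, $g$ cross terms), a Cauchy argument via Proposition~\ref{pro2.1}(5), weak limits for $U^\varepsilon,V^\varepsilon$ combined with demiclosedness of the maximal monotone graphs, and uniqueness by monotonicity. The only cosmetic difference is in your Step~2: since $\varphi_\varepsilon,\psi_\varepsilon$ are only $C^{1,1}$, the paper obtains the expansion of $\mathrm{e}^{\lambda t+\mu A_t}\varphi_\varepsilon(Y_t^\varepsilon)$ as an \emph{inequality} via the subdifferential inequality on a partition rather than by a literal It\^o formula, but this produces exactly the estimate you use.
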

In the sequel, $C>0$ is a constant which can change its value from
line to line. Firstly, we give a prior estimates on the solution.
\begin{lemma}\label{lemma1}
Assume the assumptions of \ {\rm(H1)--(H3)} hold. Then, it holds
that
\begin{equation}\label{equation2}
\mathbb{E}\left[\sup_{0\leq t\leq T}{\rm e}^{\lambda t+\mu
A_t}|Y_t^\varepsilon|^2+\int_0^T {\rm e}^{\lambda t+\mu
A_t}\left(|Y_t^\varepsilon|^2(\,{\rm d}t+\,{\rm
d}A_t)+\|Z_t^\varepsilon\|^2\,{\rm d}t\right)\right]\leq C.
\end{equation}
\end{lemma}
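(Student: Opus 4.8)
The plan is to apply It\^{o}'s formula for backward doubly stochastic integrals to the process ${\rm e}^{\lambda t+\mu A_t}|Y_t^\varepsilon|^2$ on $[t,T]$, using the equation \eqref{eq1} satisfied by $(Y^\varepsilon,Z^\varepsilon)$. This produces, after rearranging, the term ${\rm e}^{\lambda t+\mu A_t}|Y_t^\varepsilon|^2$ on the left together with $\int_t^T{\rm e}^{\lambda s+\mu A_s}\|Z_s^\varepsilon\|^2\,{\rm d}s$ (coming from the quadratic variation of the forward $W$-integral), $\lambda\int_t^T{\rm e}^{\lambda s+\mu A_s}|Y_s^\varepsilon|^2\,{\rm d}s$ and $\mu\int_t^T{\rm e}^{\lambda s+\mu A_s}|Y_s^\varepsilon|^2\,{\rm d}A_s$ (from differentiating the exponential weight), and on the right the terminal term ${\rm e}^{\lambda T+\mu A_T}|\xi|^2$, the drift pairings $2\langle Y_s^\varepsilon,f(s,Y_s^\varepsilon,Z_s^\varepsilon)\rangle\,{\rm d}s$, $2\langle Y_s^\varepsilon,g(s,Y_s^\varepsilon)\rangle\,{\rm d}A_s$, the backward-integral It\^{o} correction $\|h(s,Y_s^\varepsilon,Z_s^\varepsilon)\|^2\,{\rm d}s$ (note the sign convention for Kunita's backward integral contributes with the opposite sign to a forward one, which is exactly what makes the estimate close), the two subdifferential pairings $-2\langle Y_s^\varepsilon,\nabla\varphi_\varepsilon(Y_s^\varepsilon)\rangle\,{\rm d}s$ and $-2\langle Y_s^\varepsilon,\nabla\psi_\varepsilon(Y_s^\varepsilon)\rangle\,{\rm d}A_s$, and the stochastic integrals against ${\rm d}W_s$ and ${\rm d}B_s$.

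The key structural input is that $\nabla\theta_\varepsilon(0)=0$ (since $0\in\partial\theta(0)$ by (H2-ii), so $J_\varepsilon(0)=0$), whence monotonicity of $\nabla\theta_\varepsilon$ gives $\langle Y_s^\varepsilon,\nabla\varphi_\varepsilon(Y_s^\varepsilon)\rangle\geq 0$ and $\langle Y_s^\varepsilon,\nabla\psi_\varepsilon(Y_s^\varepsilon)\rangle\geq 0$; thus the two subdifferential terms can simply be dropped from the right-hand side. Next I would bound the coefficient pairings using (H1): the monotonicity-type conditions (H1-iv) and (H1-vi) control $\langle Y,f(s,Y,Z)-f(s,0,Z)\rangle$ and $\langle Y,g(s,Y)-g(s,0)\rangle$ by $\beta_1|Y|^2$ and $\beta_2|Y|^2$, while the remaining pieces together with (H1-ii), (H1-v) and (H1-vii) are handled by Young's inequality $2ab\leq \eta a^2+\eta^{-1}b^2$, trading an $\alpha\|Z_s^\varepsilon\|^2$ plus $\delta\|Z_s^\varepsilon\|^2$ against the $\|Z_s^\varepsilon\|^2$ on the left, and absorbing the $|Y_s^\varepsilon|^2$ terms into the $\lambda$- and $\mu$-weighted integrals. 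The precise inequalities $\lambda>2+2(\beta_1+\beta_2)+\frac{K(3-\alpha+2K)}{1-\alpha}$ and $\mu>1+2\beta_2$ in (H3) are exactly what is needed so that, after collecting terms, the coefficients of $\int_t^T{\rm e}^{\lambda s+\mu A_s}|Y_s^\varepsilon|^2\,{\rm d}s$, of $\int_t^T{\rm e}^{\lambda s+\mu A_s}|Y_s^\varepsilon|^2\,{\rm d}A_s$, and of $\int_t^T{\rm e}^{\lambda s+\mu A_s}\|Z_s^\varepsilon\|^2\,{\rm d}s$ remain strictly positive; one then takes expectations (the ${\rm d}W$ and ${\rm d}B$ integrals are martingales after localization, so vanish) to obtain
\begin{eqnarray*}
\mathbb{E}\left[{\rm e}^{\lambda t+\mu A_t}|Y_t^\varepsilon|^2+\int_t^T{\rm e}^{\lambda s+\mu A_s}\left(|Y_s^\varepsilon|^2({\rm d}s+{\rm d}A_s)+\|Z_s^\varepsilon\|^2\,{\rm d}s\right)\right]\leq C\Lambda,
\end{eqnarray*}
where $C$ depends only on $K,\alpha,\beta_1,\beta_2$, and $\Lambda<\infty$ by (H3). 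This already gives the $L^2$-parts of \eqref{equation2} (with $t=0$).

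For the supremum term one returns to the It\^{o} expansion but keeps the stochastic integrals; applying the Burkholder--Davis--Gundy inequality to $\sup_{0\leq t\leq T}\big|\int_t^T{\rm e}^{\lambda s+\mu A_s}\langle Y_s^\varepsilon,Z_s^\varepsilon\,{\rm d}W_s\rangle\big|$ and to the analogous ${\rm d}B_s$ term yields bounds of the form $\frac{1}{2}\mathbb{E}\sup_t{\rm e}^{\lambda t+\mu A_t}|Y_t^\varepsilon|^2+C\,\mathbb{E}\int_0^T{\rm e}^{\lambda s+\mu A_s}(|Y_s^\varepsilon|^2+\|Z_s^\varepsilon\|^2)\,{\rm d}s$ after Young's inequality; the first half is absorbed on the left and the integral on the right is controlled by the estimate just proved, giving $\mathbb{E}\sup_{0\leq t\leq T}{\rm e}^{\lambda t+\mu A_t}|Y_t^\varepsilon|^2\leq C$. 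The main obstacle, and the place requiring care, is the bookkeeping of signs and constants in the It\^{o} formula for the mixed forward/backward integrals --- in particular getting the backward Kunita--It\^{o} correction term $+\|h\|^2\,{\rm d}s$ with the correct sign and then checking that the threshold on $\lambda$ in (H3) is precisely the one forcing every leftover coefficient to stay positive after $\alpha+\delta<1$ is used; the rest is routine Gronwall-free absorption, since all the dangerous terms are linear in the weighted $L^2$ norms of $Y^\varepsilon$ and $Z^\varepsilon$.
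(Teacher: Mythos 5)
Your proposal is correct and follows essentially the same route as the paper: It\^{o}'s formula applied to ${\rm e}^{\lambda t+\mu A_t}|Y_t^\varepsilon|^2$, dropping the Yosida terms via $\nabla\theta_\varepsilon(0)=0$ and monotonicity (Proposition \ref{pro2.1}(4)), bounding the $f$, $g$, $h$ pairings with (H1) and Young's inequality so that the thresholds on $\lambda$ and $\mu$ in (H3) keep all coefficients positive, and finishing the supremum bound with Burkholder--Davis--Gundy. The only difference is presentational: you make explicit why $\langle Y_s^\varepsilon,\nabla\theta_\varepsilon(Y_s^\varepsilon)\rangle\geq 0$, which the paper leaves implicit in its citation of Proposition \ref{pro2.1}(4).
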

\begin{proof}
Applying It\^{o}'s formula to ${\rm e}^{\lambda t+\mu
A_t}|Y_t^\varepsilon|^2$, we obtain

 $\displaystyle{\rm e}^{\lambda t+\mu A_t}|Y_t^\varepsilon|^2
 +\int_t^T{\rm e}^{\lambda s+\mu A_s}|Y_s^\varepsilon|^2\,{\rm d}(\lambda
s+\mu A_s)$ \begin{eqnarray*}&&+2\int_t^T{\rm e}^{\lambda s+\mu
A_s}\left[\left<Y_s^\varepsilon,\nabla
\varphi_\varepsilon(Y_s^\varepsilon)\right>\,{\rm
d}s+\left<Y_s^\varepsilon,\nabla
\psi_\varepsilon(Y_s^\varepsilon)\right>\,{\rm
d}A_s\right]+\int_t^T{\rm e}^{\lambda s+\mu
A_s}\|Z_s^\varepsilon\|^2\,{\rm d}s\\&=&{\rm e}^{\lambda T+\mu
A_T}|\xi|^2+2\int_t^T{\rm e}^{\lambda s+\mu
A_s}\left<Y_s^\varepsilon,f(s,Y_s^\varepsilon,Z_s^\varepsilon)\right>\,{\rm
d}s+2\int_t^T{\rm e}^{\lambda s+\mu
A_s}\left<Y_s^\varepsilon,g(s,Y_s^\varepsilon)\right>\,{\rm
d}A_s\\&& +2\int_t^T{\rm e}^{\lambda s+\mu
A_s}\left<Y_s^\varepsilon,h(s,Y_s^\varepsilon,Z_s^\varepsilon)\,{\rm
d}B_s\right> +\int_t^T{\rm e}^{\lambda s+\mu
A_s}\|h(s,Y_s^\varepsilon,Z_s^\varepsilon)\|^2\,{\rm
d}s\\&&-2\int_t^T{\rm e}^{\lambda s+\mu
A_s}\left<Y_s^\varepsilon,Z_s^\varepsilon \,{\rm
d}W_s\right>.
\end{eqnarray*}
Using the elementary inequality
$2ab\leq\beta^2 a^2+\frac{b^2}{\beta^2}$, for all $a,b\geq 0,$
 and (H1), we get
\begin{itemize}
\item []$\begin{array}{rcl}\displaystyle 2\left<y,f(s,y,z)\right>&=&2\left<y,f(s,y,z)-f(s,y,0)+f(s,y,0)-f(s,0,0)+f(s,0,0)\right>\\\\
&\leq& 2\beta_1|y|^2+2K|y|\|z\|+|y|^2+|f(s,0,0)|^2\\\\
&\leq&\left(1+2\beta_1+\frac{K^2}{M}\right)|y|^2+M\|z\|^2+|f(s,0,0)|^2,
 \end{array}$

\item [] $\begin{array}{rcl}\|h(s,y,z)\|^2&=&\|h(s,y,z)-h(s,0,0)+h(s,0,0)\|^2\\\\
&\leq& \left(1+\frac{1}{\beta}\right)\|h(s,y,z)-h(s,0,0)\|^2+(1+\beta)\|h(s,0,0)\|^2\\\\
&\leq&\left(1+\frac{1}{\beta}\right)K|y|^2+(1+\beta)\|h(s,0,0)\|^2+\alpha\left(1+\frac{1}{\beta}\right)\|z\|^2,
 \end{array}$
\item []$\begin{array}{rcl}2\left<y,g(s,y)\right>&=&2\left<y,g(s,y)-g(s,0)+g(s,0)\right>\\\\
&\leq& 2\beta_2|y|^2+2|y||g(s,0)|\\\\
&\leq&(2\beta_2+1)|y|^2+|g(s,0)|^2.
 \end{array}$
\end{itemize}
Choosing $M=\frac{1-\alpha}{2},\beta=\frac{3\alpha}{1-\alpha}$ and
using Proposition \ref{pro2.1} (4), we get
\begin{eqnarray*}
 &&\mathbb{E}{\rm e}^{\lambda t+\mu A_t}|Y_t^\varepsilon|^2+\mathbb{E}\int_t^T
 \left(\lambda-2-2(\beta_1+\beta_2)-\frac{K(3-\alpha+2K)}{1-\alpha}\right)
 {\rm e}^{\lambda s+\mu A_s}|Y_s^\varepsilon|^2\,{\rm d}
s\\
&&+\mathbb{E}\int_t^T
 \left(\mu-2\beta_2-1\right){\rm e}^{\lambda s+\mu A_s}|Y_s^\varepsilon|^2\,{\rm d}A_s+\frac{1-\alpha}{6}\mathbb{E}\int_t^T{\rm
e}^{\lambda s+\mu A_s}\|Z_s^\varepsilon\|^2\,{\rm
d}s\\
&\leq&\mathbb{E}{\rm e}^{\lambda T+\mu
A_T}|\xi|^2+\mathbb{E}\int_t^T{\rm e}^{\lambda s+\mu
A_s}\left[\left(|f(s,0,0)|^2+\left(1+\frac{1+2\alpha}{1-\alpha}\right)
\|h(s,0,0)\|^2\right)\,{\rm d}s+|g(s,0)|^2\,{\rm
d}A_s\right].
\end{eqnarray*}
We show from (H3) that
\begin{equation}\label{equation3}\mathbb{E}{\rm e}^{\lambda t+\mu
A_t}|Y_t^\varepsilon|^2+\mathbb{E}\int_0^T
 {\rm e}^{\lambda t+\mu A_t}\left[|Y_t^\varepsilon|^2(\,{\rm d}
t+\,{\rm d}A_t)+\|Z_t^\varepsilon\|^2\,{\rm d}t\right]\leq C\Lambda.
\end{equation}
Therefore, the lemma follows from \eqref{equation3}
and Burkholder-Davis-Gundy's inequality.
\end{proof}
\begin{lemma}\label{lemma2}
Assume the assumptions of \ {\rm(H1)--(H4)} hold. Then, for all
$0\leq t\leq T,$ it holds that
\begin{itemize}
\item [\rm(i)] $\displaystyle \mathbb{E}\int_0^T {\rm e}^{\lambda t+\mu
A_t}\left(|\nabla \varphi_\varepsilon(Y_t^\varepsilon)|^2\,{\rm
d}t+|\nabla \psi_\varepsilon(Y_t^\varepsilon)|^2\,{\rm
d}A_t\right)\leq C\Lambda,$

\item [\rm(ii)] $\displaystyle \mathbb{E}\int_0^T {\rm e}^{\lambda t+\mu
A_t}\left(\varphi(J_\varepsilon(Y_t^\varepsilon))\,{\rm d}t+
\psi(\bar{J}_\varepsilon(Y_t^\varepsilon))\,{\rm d}A_t\right)\leq
C\Lambda,$

  \item [\rm(iii)]$\displaystyle \mathbb{E}{\rm e}^{\lambda t+\mu
A_t}\left(|Y_t^\varepsilon-J_\varepsilon(Y_t^\varepsilon)|^2+
|Y_t^\varepsilon-\bar{J}_\varepsilon(Y_t^\varepsilon)|^2\right)\leq
\varepsilon C\Lambda,$

  \item [\rm(iv)]$\displaystyle \mathbb{E}{\rm e}^{\lambda t+\mu
A_t}\left(\varphi(J_\varepsilon(Y_t^\varepsilon))+
\psi(\bar{J}_\varepsilon(Y_t^\varepsilon))\right)\leq C\Lambda.$
\end{itemize}
\end{lemma}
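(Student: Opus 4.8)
\emph{Proof sketch.} The plan is to feed the penalized equation \eqref{eq1} its own Yosida terms: apply It\^o's formula to $e^{\lambda t+\mu A_t}\varphi_\varepsilon(Y^\varepsilon_t)$ and to $e^{\lambda t+\mu A_t}\psi_\varepsilon(Y^\varepsilon_t)$ and add the two identities. This is licit since, by Proposition~\ref{pro2.1}(1), $\varphi_\varepsilon,\psi_\varepsilon$ are convex with Lipschitz gradient, so they obey an It\^o formula whose second-order density lies between $0$ and $\varepsilon^{-1}$. Pairing $\nabla\varphi_\varepsilon(Y^\varepsilon)$ with the $\nabla\varphi_\varepsilon(Y^\varepsilon)\,dt$-part of $dY^\varepsilon$ produces $|\nabla\varphi_\varepsilon(Y^\varepsilon)|^2\,dt$, and symmetrically $\nabla\psi_\varepsilon(Y^\varepsilon)$ against the $\nabla\psi_\varepsilon(Y^\varepsilon)\,dA$-part produces $|\nabla\psi_\varepsilon(Y^\varepsilon)|^2\,dA_t$; these are the quantities of (i) and are kept on the left. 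The off-diagonal contributions $\langle\nabla\varphi_\varepsilon(Y^\varepsilon),\nabla\psi_\varepsilon(Y^\varepsilon)\rangle(dt+dA_t)$ are $\ge 0$ by the compatibility assumption (H4)(i), and the weight-drift term $\int_t^T e^{\lambda s+\mu A_s}(\lambda\,ds+\mu\,dA_s)(\varphi_\varepsilon+\psi_\varepsilon)(Y^\varepsilon_s)$ is also $\ge 0$ (recall $A$ is increasing and $\varphi_\varepsilon,\psi_\varepsilon\ge 0$); both sit harmlessly on the left.

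On the right one is then left with: the terminal term $e^{\lambda T+\mu A_T}(\varphi_\varepsilon(\xi)+\psi_\varepsilon(\xi))\le e^{\lambda T+\mu A_T}(\varphi(\xi)+\psi(\xi))$, integrable by (H3); the matched forcing terms $\langle\nabla\varphi_\varepsilon(Y^\varepsilon),f\rangle\,dt$ and $\langle\nabla\psi_\varepsilon(Y^\varepsilon),g\rangle\,dA_t$; the mismatched forcing terms $\langle\nabla\psi_\varepsilon(Y^\varepsilon),f\rangle\,dt$ and $\langle\nabla\varphi_\varepsilon(Y^\varepsilon),g\rangle\,dA_t$; the second-order It\^o terms; and mean-zero stochastic integrals, removed by the usual localization. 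The mismatched terms are precisely what (H4)(ii)--(iii) dispose of, since $\langle\nabla\psi_\varepsilon(Y^\varepsilon),f\rangle\le\langle\nabla\varphi_\varepsilon(Y^\varepsilon),f\rangle^+$ and $\langle\nabla\varphi_\varepsilon(Y^\varepsilon),g\rangle\le\langle\nabla\psi_\varepsilon(Y^\varepsilon),g\rangle^+$, reducing them to the matched ones; for the matched ones I use $2\langle a,b\rangle\le\rho|a|^2+\rho^{-1}|b|^2$ with $\rho$ small to peel a fraction of $|\nabla\varphi_\varepsilon(Y^\varepsilon)|^2\,dt$ (resp. $|\nabla\psi_\varepsilon(Y^\varepsilon)|^2\,dA_t$) back into the left, leaving $\rho^{-1}|f|^2,\rho^{-1}|g|^2$, which (H1)(ii) bounds by $|f_t|^2+|g_t|^2+|Y^\varepsilon_t|^2+\|Z^\varepsilon_t\|^2$; the weighted integral of the latter is $\le C\Lambda$ by Lemma~\ref{lemma1} and (H3). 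Taking expectations gives (i).

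For (ii) I use the inclusion $\nabla\varphi_\varepsilon(Y^\varepsilon)\in\partial\varphi(J_\varepsilon(Y^\varepsilon))$ tested at $v=0$, which together with $\varphi\ge\varphi(0)=0$ yields $0\le\varphi(J_\varepsilon(Y^\varepsilon))\le\langle\nabla\varphi_\varepsilon(Y^\varepsilon),J_\varepsilon(Y^\varepsilon)\rangle\le|\nabla\varphi_\varepsilon(Y^\varepsilon)|\,|Y^\varepsilon|$ (since $J_\varepsilon(0)=0$ and $J_\varepsilon$ is $1$-Lipschitz), hence $\varphi(J_\varepsilon(Y^\varepsilon))\le\tfrac12|\nabla\varphi_\varepsilon(Y^\varepsilon)|^2+\tfrac12|Y^\varepsilon|^2$, and the $dt$-integrated bound follows from (i) and Lemma~\ref{lemma1}; the $\psi,\bar{J}_\varepsilon$ part is identical. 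For the pointwise bounds (iii)--(iv) I keep the terminal-position term $e^{\lambda t+\mu A_t}\varphi_\varepsilon(Y^\varepsilon_t)$ (resp. the $\psi_\varepsilon$-analogue) on the left of the same It\^o identity, so that $\mathbb{E}[e^{\lambda t+\mu A_t}\varphi_\varepsilon(Y^\varepsilon_t)]$ is controlled by the right-hand side already estimated; the identity $\varphi_\varepsilon(x)=\tfrac12|x-J_\varepsilon(x)|^2+\varepsilon\varphi(J_\varepsilon(x))$ then gives both $|x-J_\varepsilon(x)|^2\le 2\varphi_\varepsilon(x)$ and $\varepsilon\varphi(J_\varepsilon(x))\le\varphi_\varepsilon(x)$, which convert this into (iii)--(iv). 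The extra factor $\varepsilon$ in (iii) is the one already present in $\varphi_\varepsilon(x)\le\varepsilon\varphi(x)$ and in $|x-J_\varepsilon(x)|^2=\varepsilon^2|\nabla\varphi_\varepsilon(x)|^2=\varepsilon\langle x-J_\varepsilon(x),\nabla\varphi_\varepsilon(x)\rangle\le\varepsilon\langle x,\nabla\varphi_\varepsilon(x)\rangle$, i.e. it comes from pairing the defect $x-J_\varepsilon(x)$ with $\nabla\varphi_\varepsilon(x)$ rather than with itself; alternatively (iv) is obtained from the It\^o formula for $e^{\lambda t+\mu A_t}|Y^\varepsilon_t|^2$ in which, unlike in the proof of Lemma~\ref{lemma1}, the nonnegative terms $\langle Y^\varepsilon,\nabla\varphi_\varepsilon(Y^\varepsilon)\rangle\,dt+\langle Y^\varepsilon,\nabla\psi_\varepsilon(Y^\varepsilon)\rangle\,dA_t$ are retained and bounded below by $\varphi(J_\varepsilon(Y^\varepsilon))\,dt+\psi(\bar{J}_\varepsilon(Y^\varepsilon))\,dA_t$.

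The main obstacle, and the genuinely new point beyond the $h=0$ analysis of \cite{MR}, is the second-order It\^o term generated by the backward integral $\int h\,dB$. For the forward integral $\int Z^\varepsilon\,dW$ the correction $\tfrac12\,\mathrm{tr}\big[\nabla^2\varphi_\varepsilon(Y^\varepsilon)\,Z^\varepsilon(Z^\varepsilon)^*\big]\ge 0$ lands on the left and is harmless, but the backward Kunita--It\^o integral contributes the correction with the opposite sign, so $\tfrac12\,\mathrm{tr}\big[\nabla^2\varphi_\varepsilon(Y^\varepsilon)\,h(s,Y^\varepsilon,Z^\varepsilon)h(s,Y^\varepsilon,Z^\varepsilon)^*\big]$ appears on the right and must be controlled \emph{uniformly} in $\varepsilon$, avoiding the loss of a factor $\varepsilon^{-1}$ that the crude estimate $\mathrm{tr}[\nabla^2\varphi_\varepsilon MM^*]\le\varepsilon^{-1}\|M\|^2$ would incur. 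This is exactly where assumption (H1)(vii) with $\alpha<1$ enters: one splits $\|h(s,Y^\varepsilon,Z^\varepsilon)\|^2$ into a part bounded by $\|h(s,0,0)\|^2+|Y^\varepsilon|^2$ and an $\alpha\|Z^\varepsilon\|^2$-part, and re-absorbs the latter against the $\|Z^\varepsilon\|^2$-margin that survives on the left of the $|Y^\varepsilon|^2$-estimate of Lemma~\ref{lemma1} (this is also what pins down the admissible range of $\lambda,\mu$ in (H3)). Running this bookkeeping for $\varphi_\varepsilon$ and $\psi_\varepsilon$ simultaneously, consistently with (H4), is the real technical content; the remainder is the localization-and-expectation routine of Lemma~\ref{lemma1}.
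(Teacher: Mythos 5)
Your overall bookkeeping agrees with the paper's: the cross terms $\langle\nabla\varphi_\varepsilon,\nabla\psi_\varepsilon\rangle(\mathrm{d}t+\mathrm{d}A_t)$ are discarded via (H4)(i), the mismatched forcing terms are reduced to the matched ones via (H4)(ii)--(iii), the matched ones are peeled off by Young's inequality into the $|f_t|^2,|g_t|^2,|Y^\varepsilon|^2,\|Z^\varepsilon\|^2$ quantities already controlled by (H3) and Lemma \ref{lemma1}, and (ii)--(iv) follow from the elementary Yosida facts ($\nabla\varphi_\varepsilon(y)\in\partial\varphi(J_\varepsilon(y))$ tested at $0$, and $\tfrac{1}{2\varepsilon}|y-J_\varepsilon(y)|^2\le\varphi_\varepsilon(y)\le\varphi(y)$) exactly as in the paper. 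The structural difference is at the very first step: the paper never applies a genuine It\^o formula to $\varphi_\varepsilon(Y^\varepsilon)$. It writes the discrete subgradient inequality
\begin{equation*}
\varphi_\varepsilon(Y^\varepsilon_{t_{i+1}})\ \ge\ \varphi_\varepsilon(Y^\varepsilon_{t_i})+\bigl\langle\nabla\varphi_\varepsilon(Y^\varepsilon_{t_i}),\,Y^\varepsilon_{t_{i+1}}-Y^\varepsilon_{t_i}\bigr\rangle
\end{equation*}
on a partition, sums, and passes to the limit, so the resulting inequality is one-sided and first-order: no Hessian of $\varphi_\varepsilon$ ever appears. You instead invoke the full generalized (forward--backward) It\^o formula, which forces you to confront the second-order terms explicitly.

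That is where your argument has a genuine gap. You correctly identify that the backward integral $\int h\,\mathrm{d}B$ contributes $\tfrac12\int\mathrm{tr}\bigl[\nabla^2\varphi_\varepsilon(Y^\varepsilon)\,hh^*\bigr]\mathrm{d}s$ on the unfavorable side, but your proposed cure does not cure it: assumption (H1)(vii) and the re-absorption of $\alpha\|Z^\varepsilon\|^2$ control the factor $\|h(s,Y^\varepsilon,Z^\varepsilon)\|^2$, whereas the problem is the \emph{other} factor, $\nabla^2\varphi_\varepsilon$, for which the only available bound is $0\le\nabla^2\varphi_\varepsilon\le\varepsilon^{-1}I$. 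The left-hand side of your inequality carries $|\nabla\varphi_\varepsilon(Y^\varepsilon)|^2\,\mathrm{d}t$, not $\varepsilon^{-1}\|h\|^2\,\mathrm{d}t$, so there is no margin into which an $O(\varepsilon^{-1})$ quantity can be absorbed, and the estimate (i) would come out as $C\Lambda/\varepsilon$ rather than $C\Lambda$. To close the proof you must follow the paper (and \cite{PR}, \cite{MR}) and derive the starting inequality from convexity rather than from It\^o's formula, so that the Hessian term is never generated. (Even then, the passage to the limit of the left-endpoint Riemann sums against the backward increments $h\,\Delta B_i$ deserves care --- this is precisely the point where $h\ne 0$ differs from the $h=0$ case of \cite{MR} --- but that is the paper's burden as much as yours; the concrete defect in your write-up is the unabsorbed $\varepsilon^{-1}$.)
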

\begin{proof}
Here, we adopt the same arguments appeared in Pardoux and
R\u{a}\c{s}canu \cite{PR}. Given an equidistant partition of
interval $[t,T]$ such that $t=t_0<t_1<t_2<\cdots<t_n=T$ and
$t_{i+1}-t_i=\frac{1}{n}$, the subdifferential inequality shows that
\begin{eqnarray*}
{\rm e}^{\lambda t_{i+1}+\mu
A_{t_{i+1}}}\varphi_\varepsilon(Y_{t_{i+1}}^\varepsilon)\geq
\left({\rm e}^{\lambda t_{i+1}+\mu A_{t_{i+1}}}
 -{\rm e}^{\lambda t_{i}+\mu A_{t_{i}}}\right)
\varphi_\varepsilon(Y_{t_{i+1}}^\varepsilon)+{\rm e}^{\lambda
t_{i}+\mu A_{t_{i}}}\varphi_\varepsilon(Y_{t_{i}}^\varepsilon) +{\rm
e}^{\lambda t_{i}+\mu A_{t_{i}}} \left<\nabla
\varphi_\varepsilon(Y_{t_{i}}^\varepsilon),Y_{t_{i+1}}^\varepsilon-Y_{t_{i}}^\varepsilon\right>.
\end{eqnarray*}
Summing up the above formula over $i$ and letting $n\rightarrow
\infty,$ we obtain
\begin{eqnarray*}
{\rm e}^{\lambda T+\mu
A_{T}}\varphi_\varepsilon(\xi)\geq{\rm e}^{\lambda t+\mu
A_{t}}\varphi_\varepsilon(Y_t^\varepsilon)+\int_t^T {\rm e}^{\lambda
s+\mu A_{s}}\left<\nabla \varphi_\varepsilon(Y_s^\varepsilon),
\,{\rm d}Y_s^\varepsilon\right> +\int_t^T
\varphi_\varepsilon(Y_s^\varepsilon)\,{\rm d}( {\rm e}^{\lambda s+\mu
A_{s}}).
\end{eqnarray*}
From \eqref{eq1}, we obtain
\begin{eqnarray*}
&&{\rm e}^{\lambda t+\mu A_t}\left(\varphi_\varepsilon(Y_t^\varepsilon)
+\psi_\varepsilon(Y_t^\varepsilon)\right)+\int_t^T{\rm e}^{\lambda s+\mu
A_s}|\nabla \varphi_\varepsilon(Y_s^\varepsilon)|^2\,{\rm d}s+
\int_t^T{\rm e}^{\lambda s+\mu
A_s}|\nabla \psi_\varepsilon(Y_s^\varepsilon)|^2\,{\rm d}A_s\\
&&+\int_t^T{\rm e}^{\lambda s+\mu A_s}\left<\nabla
\varphi_\varepsilon(Y_s^\varepsilon),\nabla
\psi_\varepsilon(Y_s^\varepsilon)\right>(\,{\rm d}s+\,{\rm d}A_s)+
\int_t^T{\rm e}^{\lambda s+\mu A_s}\left(
\varphi_\varepsilon(Y_s^\varepsilon)+
\psi_\varepsilon(Y_s^\varepsilon)\right)(\lambda \,{\rm d}s+\mu
\,{\rm d}A_s)\\
&\leq&{\rm e}^{\lambda T+\mu
A_T}\left(\varphi_\varepsilon(\xi)+\psi_\varepsilon(\xi)\right)+
\int_t^T{\rm e}^{\lambda s+\mu A_s}\left<\nabla
\varphi_\varepsilon(Y_s^\varepsilon),f(s,Y_s^\varepsilon,Z_s^\varepsilon)\right>\,{\rm
d}s
\\&&+ \int_t^T{\rm e}^{\lambda s+\mu A_s}\left<\nabla
\psi_\varepsilon(Y_s^\varepsilon),f(s,Y_s^\varepsilon,Z_s^\varepsilon)\right>\,{\rm
d}s+ \int_t^T{\rm e}^{\lambda s+\mu A_s}\left<\nabla
\varphi_\varepsilon(Y_s^\varepsilon),g(s,Y_s^\varepsilon)\right>\,{\rm
d}A_s
\\&&+ \int_t^T{\rm e}^{\lambda s+\mu A_s}\left<\nabla
\psi_\varepsilon(Y_s^\varepsilon),g(s,Y_s^\varepsilon)\right>\,{\rm
d}A_s+ \int_t^T{\rm e}^{\lambda s+\mu A_s}\left<\nabla
\varphi_\varepsilon(Y_s^\varepsilon)+\nabla
\psi_\varepsilon(Y_s^\varepsilon),h(s,Y_s^\varepsilon,Z_s^\varepsilon)\,{\rm
d}B_s\right>
\\&&-
\int_t^T{\rm e}^{\lambda s+\mu A_s}\left<\nabla
\varphi_\varepsilon(Y_s^\varepsilon)+\nabla
\psi_\varepsilon(Y_s^\varepsilon),Z_s^\varepsilon \,{\rm
d}W_s\right>.\end{eqnarray*}
The desired results can be derived from the following facts:

 $\displaystyle\frac{1}{2 \varepsilon}|y-J_\varepsilon(y)|^2\leq
\varphi_\varepsilon(y)\leq \varphi(y),\ \frac{1}{2
\varepsilon}|y-\bar{J}_\varepsilon(y)|^2\leq \psi_\varepsilon(y)\leq
\psi(y),$

 $\displaystyle\left<\nabla \varphi_\varepsilon,f(s,y,z)\right>\leq \frac{1}{4}|\nabla
\varphi_\varepsilon|^2+12(f_t^2+K^2|y|^2+K^2\|z\|^2),$

 $\displaystyle\left<\nabla \psi_\varepsilon,g(s,y)\right>\leq \frac{1}{4}|\nabla
\psi_\varepsilon|^2+8(g_t^2+K^2|y|^2),$

 $\displaystyle\left<\nabla \psi_\varepsilon,f(s,y,z)\right>
 \leq \left<\nabla \varphi_\varepsilon,f(s,y,z)\right>^+\leq \frac{1}{4}|\nabla
\varphi_\varepsilon|^2+12(f_t^2+K^2|y|^2+K^2\|z\|^2),$

 $\displaystyle\left<\nabla \varphi_\varepsilon,g(s,y)\right>\leq
\left<\nabla \psi_\varepsilon,g(s,y)\right>^+\leq \frac{1}{4}|\nabla
\varphi_\varepsilon|^2+8(g_t^2+K^2|y|^2+K^2\|z\|^2).$
\end{proof}
\begin{lemma}\label{lemma3}
Assume the assumptions of \ {\rm(H1)--(H3)} hold. Then, it holds
that
\begin{equation}\label{equation2}
\mathbb{E}\left[\sup_{0\leq t\leq T}{\rm e}^{\lambda t+\mu
A_t}|Y_t^\varepsilon-Y_t^\delta|^2+\int_0^T {\rm e}^{\lambda t+\mu
A_t}\left(|Y_t^\varepsilon-Y_t^\delta|^2(\,{\rm d}t+\,{\rm
d}A_t)+\|Z_t^\varepsilon-Z_t^\delta\|^2\,{\rm d}t\right)\right]\leq
C(\varepsilon+\delta)\Lambda.
\end{equation}
\end{lemma}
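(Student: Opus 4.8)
The plan is to redo the computation of Lemma~\ref{lemma1}, starting this time from the difference of equation~\eqref{eq1} written for the parameters $\varepsilon$ and $\delta$. Put $\hat Y=Y^\varepsilon-Y^\delta$, $\hat Z=Z^\varepsilon-Z^\delta$, $\hat U_s=\nabla\varphi_\varepsilon(Y_s^\varepsilon)-\nabla\varphi_\delta(Y_s^\delta)$ and $\hat V_s=\nabla\psi_\varepsilon(Y_s^\varepsilon)-\nabla\psi_\delta(Y_s^\delta)$. Since $Y_T^\varepsilon=Y_T^\delta=\xi$ the terminal term disappears, and applying It\^o's formula to ${\rm e}^{\lambda t+\mu A_t}|\hat Y_t|^2$ exactly as in Lemma~\ref{lemma1} yields, for every $t\in[0,T]$,
\begin{align*}
&{\rm e}^{\lambda t+\mu A_t}|\hat Y_t|^2+\int_t^T{\rm e}^{\lambda s+\mu A_s}|\hat Y_s|^2(\lambda\,{\rm d}s+\mu\,{\rm d}A_s)+2\int_t^T{\rm e}^{\lambda s+\mu A_s}\langle\hat Y_s,\hat U_s\rangle\,{\rm d}s\\
&\qquad+2\int_t^T{\rm e}^{\lambda s+\mu A_s}\langle\hat Y_s,\hat V_s\rangle\,{\rm d}A_s+\int_t^T{\rm e}^{\lambda s+\mu A_s}\|\hat Z_s\|^2\,{\rm d}s=2\int_t^T{\rm e}^{\lambda s+\mu A_s}\langle\hat Y_s,f(s,Y_s^\varepsilon,Z_s^\varepsilon)-f(s,Y_s^\delta,Z_s^\delta)\rangle\,{\rm d}s\\
&\qquad+2\int_t^T{\rm e}^{\lambda s+\mu A_s}\langle\hat Y_s,g(s,Y_s^\varepsilon)-g(s,Y_s^\delta)\rangle\,{\rm d}A_s+\int_t^T{\rm e}^{\lambda s+\mu A_s}\|h(s,Y_s^\varepsilon,Z_s^\varepsilon)-h(s,Y_s^\delta,Z_s^\delta)\|^2\,{\rm d}s+N_t,
\end{align*}
where $N_t$ gathers the two stochastic integrals (a backward one against $B$ and a forward one against $W$).

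Then I would estimate the three groups of terms. For the approximate subgradients, Proposition~\ref{pro2.1}(5) gives $\langle\hat Y_s,\hat U_s\rangle\geq-(\varepsilon+\delta)\langle\nabla\varphi_\varepsilon(Y_s^\varepsilon),\nabla\varphi_\delta(Y_s^\delta)\rangle$ and similarly for $\langle\hat Y_s,\hat V_s\rangle$; moving these to the right-hand side and using $2|\langle a,b\rangle|\leq|a|^2+|b|^2$ bounds them by $(\varepsilon+\delta)\int_t^T{\rm e}^{\lambda s+\mu A_s}(|\nabla\varphi_\varepsilon(Y_s^\varepsilon)|^2+|\nabla\varphi_\delta(Y_s^\delta)|^2)\,{\rm d}s$ plus the analogous ${\rm d}A_s$-integral with $\psi$, whose expectations are $\leq C(\varepsilon+\delta)\Lambda$ by Lemma~\ref{lemma2}(i). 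For the coefficient terms I would split $f(s,Y_s^\varepsilon,Z_s^\varepsilon)-f(s,Y_s^\delta,Z_s^\delta)$ through $f(s,Y_s^\delta,Z_s^\varepsilon)$ and use (H1)(iv)--(v) together with $2K|\hat Y_s|\,\|\hat Z_s\|\leq\frac{K^2}{M}|\hat Y_s|^2+M\|\hat Z_s\|^2$, getting $2\langle\hat Y_s,f(s,Y_s^\varepsilon,Z_s^\varepsilon)-f(s,Y_s^\delta,Z_s^\delta)\rangle\leq(2\beta_1+K^2/M)|\hat Y_s|^2+M\|\hat Z_s\|^2$; (H1)(vii) gives $\|h(s,Y_s^\varepsilon,Z_s^\varepsilon)-h(s,Y_s^\delta,Z_s^\delta)\|^2\leq K|\hat Y_s|^2+\alpha\|\hat Z_s\|^2$, and (H1)(vi) gives $2\langle\hat Y_s,g(s,Y_s^\varepsilon)-g(s,Y_s^\delta)\rangle\leq2\beta_2|\hat Y_s|^2$. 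Choosing $M=\frac{1-\alpha}{2}$ as in Lemma~\ref{lemma1}, the restrictions on $\lambda,\mu$ in (H3) leave strictly positive coefficients in front of $|\hat Y_s|^2\,{\rm d}s$, $|\hat Y_s|^2\,{\rm d}A_s$ and $\|\hat Z_s\|^2\,{\rm d}s$ on the left, so the matching terms produced on the right are absorbed.

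Taking expectations ($\mathbb{E}N_t=0$) and setting $t=0$ then yields $\mathbb{E}\int_0^T{\rm e}^{\lambda s+\mu A_s}(|\hat Y_s|^2({\rm d}s+{\rm d}A_s)+\|\hat Z_s\|^2\,{\rm d}s)\leq C(\varepsilon+\delta)\Lambda$, as well as $\sup_{0\leq t\leq T}\mathbb{E}\,{\rm e}^{\lambda t+\mu A_t}|\hat Y_t|^2\leq C(\varepsilon+\delta)\Lambda$. To upgrade the first term to the expected supremum I would take $\sup_t$ inside the expectation in the identity above and control $\mathbb{E}\sup_t|N_t|$ by the Burkholder--Davis--Gundy inequality, bounding it by $\frac12\mathbb{E}\sup_t{\rm e}^{\lambda t+\mu A_t}|\hat Y_t|^2+C\,\mathbb{E}\int_0^T{\rm e}^{\lambda s+\mu A_s}(\|h(s,Y_s^\varepsilon,Z_s^\varepsilon)-h(s,Y_s^\delta,Z_s^\delta)\|^2+\|\hat Z_s\|^2)\,{\rm d}s$; the first piece is absorbed, and by (H1)(vii) and the bound already obtained the remaining integral is $\leq C(\varepsilon+\delta)\Lambda$. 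This gives the stated estimate.

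The step I expect to be the main obstacle is the treatment of the two approximate subgradient terms: pulling the factor $(\varepsilon+\delta)$ out of $\langle\hat Y_s,\hat U_s\rangle$ and $\langle\hat Y_s,\hat V_s\rangle$ through Proposition~\ref{pro2.1}(5), and then controlling --- uniformly in the approximation parameter --- the residual quantities $\mathbb{E}\int_0^T{\rm e}^{\lambda s+\mu A_s}|\nabla\varphi_\varepsilon(Y_s^\varepsilon)|^2\,{\rm d}s$ and $\mathbb{E}\int_0^T{\rm e}^{\lambda s+\mu A_s}|\nabla\psi_\varepsilon(Y_s^\varepsilon)|^2\,{\rm d}A_s$, which is precisely the content of Lemma~\ref{lemma2}(i). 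The remaining work is the same monotonicity-and-Young bookkeeping as in the proof of Lemma~\ref{lemma1}, with a vanishing terminal datum in place of $\xi$.
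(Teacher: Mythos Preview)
Your proposal is correct and follows the same route as the paper: apply It\^o's formula to ${\rm e}^{\lambda t+\mu A_t}|Y_t^\varepsilon-Y_t^\delta|^2$, use (H1)(iv)--(vii) with the choice $M=\frac{1-\alpha}{2}$ to handle the $f,g,h$ increments, invoke Proposition~\ref{pro2.1}(5) to extract the factor $(\varepsilon+\delta)$ from the subgradient terms, bound the resulting quantities by Lemma~\ref{lemma2}(i), and finish with Burkholder--Davis--Gundy for the supremum. The only cosmetic difference is that you split $f$ through $f(s,Y_s^\delta,Z_s^\varepsilon)$ whereas the paper splits through $f(s,Y_s^\varepsilon,Z_s^\delta)$, which is immaterial.
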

\begin{proof}
Applying It\^{o}'s formula to ${\rm e}^{\lambda t+\mu
A_t}|Y_t^\varepsilon-Y_t^\delta|^2$, we obtain
 \begin{eqnarray*}
&& {\rm e}^{\lambda t+\mu A_t}|Y_t^\varepsilon-Y_t^\delta|^2
 +\int_t^T{\rm e}^{\lambda s+\mu A_s}|Y_s^\varepsilon-Y_s^\delta|^2\,{\rm d}(\lambda
s+\mu A_s)+\int_t^T{\rm e}^{\lambda s+\mu
A_s}\|Z_s^\varepsilon-Z_s^\delta\|^2\,{\rm d}s\\
&=&2\int_t^T{\rm e}^{\lambda s+\mu
A_s}\left<Y_s^\varepsilon-Y_s^\delta,f(s,Y_s^\varepsilon,Z_s^\varepsilon)
-f(s,Y_s^\delta,Z_s^\delta)\right>\,{\rm d}s\\&&+2\int_t^T{\rm
e}^{\lambda s+\mu
A_s}\left<Y_s^\varepsilon-Y_s^\delta,g(s,Y_s^\varepsilon)-g(s,Y_s^\delta)\right>\,{\rm
d}A_s\\
&&-2\int_t^T{\rm e}^{\lambda s+\mu
A_s}\left[\left<Y_s^\varepsilon-Y_s^\delta,\nabla
\varphi_\varepsilon(Y_s^\varepsilon)-\nabla
\varphi_\delta(Y_s^\delta) \right>\,{\rm
d}s+\left<Y_s^\varepsilon-Y_s^\delta,\nabla
\psi_\varepsilon(Y_s^\varepsilon)-\nabla \psi_\delta(Y_s^\delta)
\right>\,{\rm d}A_s\right]
\\&&+2\int_t^T{\rm e}^{\lambda s+\mu
A_s}\left<Y_s^\varepsilon-Y_s^\delta,\left(h(s,Y_s^\varepsilon,Z_s^\varepsilon)-
h(s,Y_s^\delta,Z_s^\delta)\right)\,{\rm d}B_s\right>\\&&
+\int_t^T{\rm e}^{\lambda s+\mu
A_s}||h(s,Y_s^\varepsilon,Z_s^\varepsilon)-h(s,Y_s^\delta,Z_s^\delta)||^2\,{\rm d}s\\
&&-2\int_t^T{\rm e}^{\lambda s+\mu
A_s}\left<Y_s^\varepsilon-Y_s^\delta,\left(Z_s^\varepsilon-Z_s^\delta\right)
\,{\rm d}W_s\right>.
\end{eqnarray*}
 Using the elementary inequality
$2ab\leq \beta^2 a^2+\frac{b^2}{\beta^2}$, for all $a,b\geq 0,$ and
(H1), we get
\begin{eqnarray*}
 2\left<Y_s^\varepsilon-Y_s^\delta,f(s,Y_s^\varepsilon,Z_s^\varepsilon)
-f(s,Y_s^\delta,Z_s^\delta)\right>
&=&2\left<Y_s^\varepsilon-Y_s^\delta,f(s,Y_s^\varepsilon,Z_s^\varepsilon)
-f(s,Y_s^\varepsilon,Z_s^\delta)\right.\\&&
\left.+f(s,Y_s^\varepsilon,Z_s^\delta)-f(s,Y_s^\delta,Z_s^\delta)\right>\\
&\leq& 2\beta_1|Y_s^\varepsilon-Y_s^\delta|^2+2K|Y_s^\varepsilon-Y_s^\delta|\|Z_s^\varepsilon-Z_s^\delta\|\\
&=&\left(2\beta_1+\frac{K^2}{M}\right)|Y_s^\varepsilon-Y_s^\delta|^2+M\|Z_s^\varepsilon-Z_s^\delta\|^2,
\end{eqnarray*}
\begin{eqnarray*}
\|h(s,Y_s^\varepsilon,Z_s^\varepsilon)-h(s,Y_s^\delta,Z_s^\delta)\|^2
&\leq&K|Y_s^\varepsilon-Y_s^\delta|^2+\alpha\|Z_s^\varepsilon-Z_s^\delta\|^2,
 \end{eqnarray*}
\begin{eqnarray*}
\left<Y_s^\varepsilon-Y_s^\delta,g(s,Y_s^\varepsilon)-g(s,Y_s^\delta)\right>
&\leq& 2\beta_2|Y_s^\varepsilon-Y_s^\delta|^2.
 \end{eqnarray*}\\
Choosing $M=\frac{1-\alpha}{2}$ and noting Proposition \ref{pro2.1}
(5), we get
\begin{eqnarray*}
&&\mathbb{E}{\rm e}^{\lambda t+\mu A_t}|Y_t^\varepsilon|^2+\mathbb{E}\int_t^T
 \left(\lambda-2\beta_2-\frac{2K^2}{1-\alpha}-K\right){\rm e}^{\lambda s+\mu A_s}|Y_s^\varepsilon|^2\,{\rm d}s\\
&&+\mathbb{E}\int_t^T \left(\mu-2\beta_2\right){\rm e}^{\lambda s+\mu A_s}|Y_s^\varepsilon|^2\,{\rm d} A_s
 +\frac{1-\alpha}{2}\mathbb{E}\int_t^T{\rm e}^{\lambda s+\mu A_s}\|Z_s^\varepsilon\|^2\,{\rm d}s\\
&\leq& 2(\varepsilon+\delta)\mathbb{E}\int_t^T{\rm e}^{\lambda s+\mu A_s}\left[\left<\nabla
\varphi_\varepsilon(Y_s^\varepsilon),\nabla\varphi_\delta(Y_s^\delta)\right>\,{\rm
d}s +\left<\nabla\psi_\varepsilon(Y_s^\varepsilon),
\nabla\psi_\delta(Y_s^\delta)\right>\,{\rm
d}A_s\right].
\end{eqnarray*}
Thus, the desired result follows from
Lemma \ref{lemma2} (i) and Burkholder-Davis-Gundy's
inequality.\end{proof}

We now give the following:
\begin{proof}[Proof of Theorem \ref{thm3.1}] {\it Existence} \newline Lemma
\ref{lemma3} shows that $(Y^\varepsilon,Z^\varepsilon)$ is a Cauchy
sequence in $
 \left(\mathcal{S}_k^{\lambda,\mu} \cap\mathcal{M}_k^{\lambda,\mu}
 \cap  \bar{\mathcal{M}}_k^{\lambda,\mu}\right)\times
 \mathcal{M}_{k\times d}^{\lambda,\mu}.$ We denote its limit as
$(Y,Z)$. Then, $(Y,Z) \in \left(\mathcal{S}_k^{\lambda,\mu}
\cap\mathcal{M}_k^{\lambda,\mu}
 \cap  \bar{\mathcal{M}}_k^{\lambda,\mu}\right)\times
 \mathcal{M}_{k\times d}^{\lambda,\mu}.$ Lemma
\ref{lemma2} shows that
$$
\lim_{\varepsilon\rightarrow 0}J_\varepsilon(Y^\varepsilon)=Y \
\mbox{in}\ \mathcal{M}_k^{\lambda,\mu}, \
\lim_{\varepsilon\rightarrow 0}\bar{J}_\varepsilon(Y^\varepsilon)=Y
\ \mbox{in}\ \bar{\mathcal{M}}_k^{\lambda,\mu}$$ and for all $0\leq
t\leq T,$
$$
\lim_{\varepsilon\rightarrow 0}\mathbb{E}{\rm e}^{\lambda t+\mu
A_t}\left[|J_\varepsilon(Y_t^\varepsilon)-Y_t|^2+
|\bar{J}_\varepsilon(Y_t^\varepsilon)-Y_t|^2\right]=0.$$ Fatou's
lemma, Lemma \ref{lemma2} and the fact that $\varphi$ and $\psi$ are
l.s.c. show that (3) of Definition \ref{def2.1} is satisfied. In
addition, from Lemma \ref{lemma2}, we have
 $$\mathbb{E} \int_0^T{\rm e}^{\lambda t+\mu
A_t}\left(|U_t^\varepsilon|^2\,{\rm d}t+|V_t^\varepsilon|^2\,{\rm
d}A_t\right)\leq C \Lambda,$$
 which shows that $U_t^\varepsilon $ and $V_t^\varepsilon$ are bounded
in the space $\mathcal{M}_{k}^{\lambda,\mu}$ and
$\bar{\mathcal{M}}_{k}^{\lambda,\mu}$ respectively. So, there exists
a subsequence $\varepsilon_n \to 0$ such that
$$U^{\varepsilon_n}\rightharpoonup U,\ \mbox{weakly in the space} \
\mathcal{M}_{k}^{\lambda,\mu},$$
$$V^{\varepsilon_n}\rightharpoonup V,\ \mbox{weakly in the space} \
\bar{\mathcal{M}}_{k}^{\lambda,\mu}.$$
Furthermore, we get
$$\mathbb{E} \int_0^T{\rm e}^{\lambda t+\mu
A_t}\left(|U_t|^2\,{\rm d}t+|V_t|^2\,{\rm
d}A_t\right)\leq \liminf_{n\to \infty} \mathbb{E} \int_0^T{\rm
e}^{\lambda t+\mu A_t}\left(|U_t^{\varepsilon_n}|^2\,{\rm
d}t+|V_t^{\varepsilon_n}|^2\,{\rm d}A_t\right)\leq C \Lambda.$$
Thus, the process $(Y,U,V,Z)$ satisfies $(5)$ of Definition
\ref{def2.1} by passing limit in \eqref{eq1}.

Finally, we show that $(4)$ of Definition \ref{def2.1} is satisfied.
For all $0\leq t\leq T$, since  $U_t^\varepsilon\in \partial
\varphi(J_\varepsilon(Y_t^\varepsilon))$ and $V_t^\varepsilon\in
\partial\psi(\bar{J}_\varepsilon(Y_t^\varepsilon))$, it follows that
\begin{eqnarray*}
{\rm e}^{\lambda t+\mu
A_t}\left<U_t^\varepsilon,U_t-J_\varepsilon(Y_t^\varepsilon)\right>+{\rm
e}^{\lambda t+\mu A_t}\varphi(J_\varepsilon(Y_t^\varepsilon))\leq
{\rm e}^{\lambda t+\mu A_t}\varphi(U_t),\ \,{\rm d}\mathbb{P}\times
\,{\rm d}t\mbox{-a.e.},
\end{eqnarray*}
and
\begin{eqnarray*}
{\rm e}^{\lambda t+\mu
A_t}\left<V_t^\varepsilon,V_t-\bar{J}_\varepsilon(Y_t^\varepsilon)\right>+{\rm
e}^{\lambda t+\mu A_t}\psi(\bar{J}_\varepsilon(Y_t^\varepsilon))\leq
{\rm e}^{\lambda t+\mu A_t}\varphi(V_t),\ \,{\rm d}\mathbb{P}\times
A(\omega,\,{\rm d}t)\mbox{-a.e.}
\end{eqnarray*}
Taking the $\liminf$ in the above two inequalities, $(4)$ of Definition \ref{def2.1} holds.

{\it Uniqueness}\newline Let $(Y_t,U_t,V_t,Z_t)_{0\leq t \leq T}$ and
$(Y_t^\prime,U_t^\prime,V_t^\prime,Z_t^\prime)_{0\leq t \leq T}$ be
two solutions of the BDSGVI \eqref{equation1}. Denote
$$(\triangle Y_t, \triangle U_t,\triangle V_t,\triangle Z_t)_{0\leq t \leq
T}=(Y_t-Y_t^\prime,U_t-U_t^\prime,V_t-V_t^\prime,Z_t-Z_t^\prime)_{0\leq
t \leq T}.$$Applying It\^{o}'s  formula to ${\rm e}^{\lambda t+\mu
A_t}|\triangle Y_t|^2$ shows that

 $\displaystyle\mathbb{E}{\rm e}^{\lambda t+\mu
A_t}|\triangle Y_t|^2+2\mathbb{E}\int_t^T{\rm e}^{\lambda s+\mu
A_s}\left<\triangle U_s, \triangle Y_s\right>\,{\rm
d}s+2\mathbb{E}\int_t^T{\rm e}^{\lambda s+\mu A_s}\left<\triangle
V_s, \triangle Y_s\right>\,{\rm d}A_s$
$$+\mathbb{E}\int_t^T{\rm e}^{\lambda s+\mu
A_s}|\triangle Y_s|^2(\lambda \,{\rm d}s+\mu \,{\rm d}A_s)
 + \mathbb{E}\int_t^T{\rm e}^{\lambda s+\mu
A_s}\|\triangle Z_s\|^2\,{\rm d}s$$
\begin{eqnarray}
&=&2\mathbb{E}\int_t^T{\rm e}^{\lambda s+\mu
A_s}\left<\triangle Y_s,f(s,Y_s,Z_s)-f(s,Y_s^\prime,Z_s^\prime)\right>\,{\rm d}s\nonumber\\
&&+2\mathbb{E}\int_t^T{\rm e}^{\lambda s+\mu A_s}\left<\triangle
Y_s,g(s,Y_s)-g(s,Y_s^\prime)\right>\,{\rm
d}A_s\nonumber\\&&+\mathbb{E}\int_t^T {\rm e}^{\lambda s+\mu
A_s}\|h(s,Y_s,Z_s)-h(s,Y_s^\prime,Z_s^\prime)\|^2\,{\rm d}s.
 \end{eqnarray}
Since $\partial \varphi$ and $\partial \psi$ are monotone, we obtain
$$\left<\triangle U_s, \triangle Y_s\right>\geq 0,\ \,{\rm d}\mathbb{P}\times {\rm
d}t\mbox{-a.e.},\ \left<\triangle V_s, \triangle Y_s\right>\geq 0,\
\,{\rm d}\mathbb{P}\times A(\omega,{\rm d}t)\mbox{-a.e.}$$ Thus, as
the same procedure as Lemma \ref{lemma3}, we can show the uniqueness
of the solution.
\end{proof}

\section{Stochastic viscosity solutions of SPVI with a mixed nonlinear Neumann-Dirichlet boundary condition}

In this section, we consider the one-dimensional equation, i.e. $k =
1$. We will investigate the BDSGVI studied in the previous section
in order to give the existence of the stochastic viscosity solution
of a class of SPVI with a mixed nonlinear Neumann-Dirichlet boundary
condition. For this, we need some additional hypotheses and tools.

\subsection{Notion of stochastic viscosity solution of SPVI with a
mixed nonlinear Neumann-Dirichlet boundary condition}

With the same notations as in Section 2, let ${\bf
F}^{B}=\{\mathcal{F}_{t,T}^{B}\}_{0\leq t\leq T}$ be the filtration
generated by $B$, where $B$ is a one-dimensional Brownian motion. By
${\mathcal{M}}^{B}_{0,T}$, we denote all the ${\bf F}^{B}$-stopping
times $\tau$ such $0\leq \tau\leq T$, a.s.
${\mathcal{M}}^{B}_{\infty}$ is the set of all almost surely finite
${\bf F}^{B}$-stopping times. For generic Euclidean spaces $E$ and
$E_{1}$, we introduce the following spaces:
\begin{enumerate}
\item The symbol $\mathcal{C}^{k,n}([0,T]\times
E; E_{1})$ stands for the space of all $E_{1}$-valued functions
defined on $[0,T]\times E$ which are $k$-times continuously
differentiable in $t$ and $n$-times continuously differentiable in
$x$, and $\mathcal{C}^{k,n}_{b}([0,T]\times E; E_{1})$ denotes the
subspace of $\mathcal{C}^{k,n}([0,T]\times E; E_{1})$ in which all
functions have uniformly bounded partial derivatives.
\item For any sub-$\sigma$-field $\mathcal{G} \subseteq
\mathcal{F}_{T}^{B}$, $\mathcal{C}^{k,n}(\mathcal{G},[0,T]\times E;
E_{1})$ (resp.\, $\mathcal{C}^{k,n}_{b}(\mathcal{G},[0,T]\times E;
E_{1})$) denotes the space of all $\mathcal{C}^{k,n}([0,T]\times E;
E_{1})$  (resp.\, $\mathcal{C}^{k,n}_{b}([0,T]\times E;E_{1})$-valued
random variable that are $\mathcal{G}\otimes\mathcal{B}([0,T]\times
E)$-measurable;
\item $\mathcal{C}^{k,n}({\bf F}^{B},[0,T]\times E; E_{1})$
(resp.$\mathcal{C}^{k,n}_{b}({\bf F}^{B},[0,T]\times E; E_{1})$) is
the space of all random fields $\phi\in
\mathcal{C}^{k,n}({\mathcal{F}}_{T},[0,T]\times E; E_{1}$ (resp.
$\mathcal{C}^{k,n}({\mathcal{F}}_{T},[0,T]\times E; E_{1})$, such
that for fixed $x\in E$ and $t\in [0,T]$, the mapping
$\displaystyle{\omega\rightarrow \alpha(t,\omega,x)}$ is
${\bf F}^{B}$-progressively measurable.
\item For any sub-$\sigma$-field $\mathcal{G} \subseteq
\mathcal{F}^{B}$ and a real number $ p\geq 0$, let
$L^{p}(\mathcal{G};E)$ be a set of all $E$-valued,
$\mathcal{G}$-measurable random variable $\xi$ such that $
\E|\xi|^{p}<\infty$.
\end{enumerate}
Furthermore, regardless of the dimension, we denote by
$\left<\cdot,\cdot\right>$ and $|\cdot|$ the inner product and norm
in $E$ and $E_1$, respectively. For
$(t,x,y)\in[0,T]\times\R^{d}\times\R$, we denote
$D_{x}=(\frac{\partial}{\partial
x_{1}},....,\frac{\partial}{\partial x_{d}}),\,
D_{xx}=(\partial^{2}_{x_{i}x_{j}})_{i,j=1}^{d}$,
$D_{y}=\frac{\partial}{\partial y}, \,\
D_{t}=\frac{\partial}{\partial t}$. The meaning of $D_{xy}$ and
$D_{yy}$ is then self-explanatory.

Let $\Theta$ be an open connected and smooth bounded domain of $\R^{d}\, (d\geq
1)$ such that
for a function $\phi\in\mathcal{C}^{2}_b(\R^{d}),\ \Theta$ and its
boundary  $\partial\Theta$ are characterized by
$\Theta=\{\phi>0\},\, \partial\Theta=\{\phi=0\}$ and, for any
$x\in\partial\Theta,\, \nabla \phi(x)$ is the unit normal vector
pointing towards the interior of $\Theta$.
In this section, we consider the continuous coefficients $b,\,\sigma,\, l,\, f,\, \phi$ and $h$
\begin{eqnarray*}
f&:&\Omega\times[0,T]\times\overline{\Theta}\times\R\times\R^{d}\to
\R\\
g&:&\Omega\times[0,T]\times\overline{\Theta}\times\R\to
\R\\
\sigma&:&\R^{d}\to\R^{d\times
d} \; \mbox{and}\,\, b:\R^{d}\to\R^{d}\\
\chi&:&\overline{\Theta}\to \R,
\end{eqnarray*}
satisfy that
\begin{description}
\item $
(\rm{ H5})\ \left\{
\begin{array}{l}
|f(t,x,y,z)|\leq K(1+|x|+|y|+\|z\|),\\\\
|g(t,x,y)|\leq K(1+|x|+|y|),\\\\
|\chi(x)|+|\varphi(\chi(x))|+|\psi(\chi(x))|\leq K(1+|x|).
\end{array}\right.
$
\item
$ (\rm{ H6})\ \left\{
\begin{array}{l}
|b(x)-b(x')|+\|\sigma(x)-\sigma(x')\|\leq c|x-x'|,\\\\
|f(t,x,y,z)-f(t,x,y',z')|\leq c(|y-y'|+\|z-z'\|),\\\\
\langle y-y',g(t,x,y)-g(t,x,y')\rangle\leq \beta|y-y'|^2.
\end{array}\right.
$
\item $(\rm{H7})$ The function
$h\in{\mathcal{C}}_{b}^{0,2,3}([0,T]\times\overline{\Theta}\times\R;\R)$.
\end{description}
Let us consider the following SPVI with mixed nonlinear
Neumann-Dirichlet boundary condition:
\begin{eqnarray}
\mathcal{SPVI}^{(f,g,h,\chi,\varphi,\psi)}\left\{
\begin{array}{l}
{\rm(i)}\;\displaystyle\Bigg(\frac{\partial u(t,x)}{\partial
t}+\left[
Lu(t,x)+f(t,x,u(t,x),\sigma^{*}(x)D_{x}u(t,x))\right]\\\\
~~~~~~~~~~~~~~~~-h(t,x,u(t,x))\dot{B}_{s}\Bigg)\in\partial\varphi,\,\,\
(t,x)\in[0,T]\times\Theta,\\\\
{\rm (ii)}\;\displaystyle\frac{\partial u}{\partial
n}(t,x)+g(t,x,u(t,x))\in\partial\psi,\,\,\ (t,x)\in[0,T]\times\partial\Theta, \\\\
{\rm (iii)}\; u(T,x)=\chi(x),\,\,\,\,\,\,\ x\in\overline{\Theta},
\end{array}\right.\label{SPVI}
\end{eqnarray}
where
\begin{eqnarray}
L=\frac{1}{2}\sum_{i,j=1}^{d}(\sigma(x)\sigma^{*}(x))_{i,j}
\frac{\partial^{2}}{\partial x_{i}\partial
x_{j}}+\sum^{d}_{i=1}b_{i}(x)\frac{\partial}{\partial x_{i}},\quad
\forall\, x\in\Theta,\label{operateur}
\end{eqnarray}
and
\begin{eqnarray*}
\frac{\partial}{\partial n}=\sum_{i=1}^{d}\frac{\partial\phi }{\partial
x_{i}}(x)\frac{\partial}{\partial x_{i}},\quad \forall\,
x\in\partial\Theta.
\end{eqnarray*}
We first define the mean of stochastic viscosity solution to
$\mathcal{SPVI}^{(f,g,h,\chi,\varphi,\psi)}$ \eqref{SPVI}. As
mentioned in introduction, our notion of stochastic viscosity
solution uses the stochastic sub- and super-jets introduced by
Buckdahn and Ma \cite{BMa}. Next, the existence result will be
derived by the use of the  well-known Doss-Sussman transformation.
In this fact, let us recall the statement appeared in \cite{BMa}.
\begin{definition}
Let $\tau\in {\mathcal{M}}^{B}_{0,T}$, and
$\xi\in\mathcal{F}_{\tau}$. We say that a sequence of random
variables $(\tau_k,\xi_k)$ is a $(\tau,\xi)$-approximating sequence
if for all $k$, $(\tau_k,\xi_k)\in{\mathcal{M}}^{B}_{\infty}\times
L^{2}(\mathcal{F}_{\tau},\Theta)$ such that
\begin{itemize}
\item [(i)] $\xi_k\rightarrow\xi$  in probability;
\item [(ii)] either $\tau_k\uparrow\tau$ a.s., and $\tau_k<\tau$ on the set $\{\tau>0\}$; or $\tau_k\downarrow\tau$ a.s., and $\tau_k>\tau$ on the set $\{\tau<T\}$.
\end{itemize}
\end{definition}
\begin{definition}
Let $(\tau,\xi)\in \mathcal{M}_{0,T}^B\times
L^2\left(\mathcal{F}^{B}_{\tau}; \Theta\right)$ and $u\in
\mathcal{C}\left(\mathbf{F}^B, [0,T]\times
\overline{\Theta}\right)$. A triplet of $(a, p,X)$ is called a
stochastic $h$-superjet of $u$ at $(\tau,\xi)$ if the following
terms hold:
\begin{itemize}
\item [(i)] $(a,b,c,p,q,X)$ is an $\R\times\R\times\R\times\R^d\times\R^n\times\mathcal{S}(n)$-valued,
 $\mathcal{F}_{\tau}^B$-measurable random
vector, where $\mathcal{S}(n)$ is the set of all symmetric $n\times n$ matrix.
\item[(ii)] Denoting
\begin{eqnarray*}
\left\{
\begin{array}{l}
b=g(\tau,\xi,u(\tau,\xi)),\;\;\; c=(g\partial_ug)(\tau,\xi,u(\tau,\xi))\\\\
q=\partial_xg(\tau,\xi,u(\tau,\xi))+\partial_ug(\tau,\xi,u(\tau,\xi))p.
\end{array}\right.
\end{eqnarray*}
\end{itemize}
Then, for all $(\tau, \xi)$-approximating sequence $(\tau_k,\xi_k)$,
it holds that
\begin{eqnarray}
u(\tau_k,\xi_k)&\leq& u(\tau,\xi)+a(\tau_k-\tau)+b(B_{\tau_k}-B_{\tau})+\frac{c}{2}(B_{\tau_k}-B_{\tau})^2+\langle p,\xi_k-\xi\rangle\nonumber\\
&&+\langle q,\xi_k-\xi\rangle(B_{\tau_k}-B_{\tau})+\frac{1}{2}\langle X(\xi_k-\xi),\xi_k-\xi\rangle\nonumber\\
&&+o(|\tau_k-\tau|)+o(|\xi_k-\xi|^2).\label{jet}
\end{eqnarray}
Next, $\mathcal{J}^{1,2,+}_{h} u(\tau,\xi)$ denotes the set of all
stochastic $h$-superjet of $u$ at $(\tau, \xi)$. Similarly, the
triplet of $(a, p,X)$ is a stochastic $h$-subjet of $u$ at $(\tau,
\xi)$ if $(i)$ and $(ii)$ hold and the inequality in \eqref{jet} is
reversed. Moreover, $\mathcal{J}^{1,2,-}_{h} u(\tau,\xi)$ denotes
the set of all stochastic $h$-subjet of $u$ at $(\tau,\xi)$.
\end{definition}
\begin{remark}
For $\theta$ equal to $\varphi$ or $\psi$, we emphasize that $\partial\theta(y)= [\theta'_l(y),\theta'_r(y)]$, for every $y\in {\rm
Dom}(\theta)$ where $\theta'_l(y)$ and $\theta'_r(y)$ denote the left and right derivatives of $\theta$.
\end{remark}
Now, we define the stochastic viscosity solution of
$\mathcal{SPVI}^{(f,g,h,\chi,\varphi,\psi)}$ \eqref{SPVI}. In order
to simplify the presentation, we set
\begin{eqnarray*}
V_{f}(\tau,\xi,a,p,X)=-a-\frac{1}{2}{\rm Trace}(\sigma\sigma^*(\xi)X)-\langle
p,b(\xi)\rangle-f\left(\tau,\xi,u(\tau,\xi),\sigma^*(\xi)p\right).
\end{eqnarray*}
\begin{definition}\label{defvisco}
A random field $u \in \mathcal{C}\left(\mathbf{F}^B, [0,T]\times
\overline{\Theta}\right)$   which satisfies that
$u\left(T,x\right)=\chi\left(x\right)$, for all $x\in
\overline{\Theta}$, is called a stochastic viscosity subsolution of
$\mathcal{SPVI}^{(f,g,h,\chi,\varphi,\psi)}$ \eqref{SPVI} if
\begin{eqnarray*}
u(\tau,\xi)&\in & {\rm Dom}(\varphi),\;\;\;\;\;\; \forall\; (\tau,\xi)\in\mathcal{M}_{0,T}^B\times L^2\left(\mathcal{F}^{B}_{\tau};\Theta\right),\;\;\; \P\mbox{-a.s.},\\
u(\tau,\xi)&\in & {\rm Dom}(\psi),\;\;\;\;\;\; \forall\;
(\tau,\xi)\in\mathcal{M}_{0,T}^B\times
L^2\left(\mathcal{F}^{B}_{\tau};\partial\Theta\right),\;\;\;
\P\mbox{-a.s.},
\end{eqnarray*}
and at any $(\tau,\xi)\in\mathcal{M}_{0,T}^B\times L^2\left(\mathcal{F}^{B}_{\tau};\Theta\right)$, for any $(a, p,X)\in\mathcal{J}^{1,2,+}_{h} u(\tau,\xi)$, it hold $\P$-a.s.
\begin{itemize}
\item[(a)] on the event $\left\{0<\tau<T\right\}\cap\left\{\xi\in
\Theta\right\}$
\begin{equation}\label{E:def1}
V_{f}(\tau,\xi,a,p,X)+\varphi'_l(u(\tau,\xi)-\frac{1}{2}(h\partial_uh)(\tau,\xi,u(\tau,\xi))\leq 0;
\end{equation}
\item[(b)] on the event $\left\{0<\tau<T\right\}\cap\left\{\xi\in
\partial \Theta\right\}$

$\displaystyle\min\Big(V_{f}(\tau,\xi,a,p,X)+\varphi'_l(u(\tau,\xi)-\frac{1}{2}(h\partial_uh)(\tau,\xi,u(\tau,\xi)),$
\begin{align}
\langle\nabla
\phi(\xi),p\rangle-g(\tau,\xi,u(\tau,\xi))+\psi'_l(u(\tau,\xi)\Big)\leq
0. \label{E:viscosity01}
\end{align}
\end{itemize}
A random field $u \in \mathcal{C}\left(\mathbf{F}^B, [0,T]\times
\overline{\Theta}\right)$   which satisfies that
$u\left(T,x\right)=\chi\left(x\right)$, for all $x\in
\overline{\Theta}$, is called a stochastic viscosity supersolution
of $\mathcal{SPVI}^{(f,g,h,\chi,\varphi,\psi)}$ \eqref{SPVI} if
\begin{eqnarray*}
u(\tau,\xi)&\in & {\rm Dom}(\varphi),\;\;\;\;\;\; \forall\;(\tau,\xi)\in\mathcal{M}_{0,T}^B\times L^2\left(\mathcal{F}^{B}_{\tau};\Theta\right),\;\;\; \P\mbox{-a.s.},\\
u(\tau,\xi)&\in & {\rm Dom}(\psi),\;\;\;\;\;\;
\forall\;(\tau,\xi)\in\mathcal{M}_{0,T}^B\times
L^2\left(\mathcal{F}^{B}_{\tau};\partial\Theta\right),\;\;\;
\P\mbox{-a.s.},
\end{eqnarray*}
and at any $(\tau,\xi)\in\mathcal{M}_{0,T}^B\times L^2\left(\mathcal{F}^{B}_{\tau};\Theta\right)$, for any $(a, p,X)\in\mathcal{J}^{1,2,-}_{h} u(\tau,\xi)$, it hold $\P$-a.s.
\begin{itemize}
\item[(a)] on the event $\left\{0<\tau<T\right\}\cap\left\{\xi\in
\Theta\right\}$
\begin{equation}\label{E:def01}
V_f(\tau,\xi,a,p,X)+\varphi'_r(u(\tau,\xi)-\frac{1}{2}(g\partial_ug)(\tau,\xi,u(\tau,\xi))\geq 0;
\end{equation}
\item[(b)] on the event $\left\{0<\tau<T\right\}\cap\left\{\xi\in
\partial \Theta\right\}$
\begin{align}
\max\left(V_f(\tau,\xi,a,p,X)+\varphi'_r(u(\tau,\xi)-\frac{1}{2}(h\partial_uh)(\tau,\xi,u(\tau,\xi)),\langle\nabla \phi(\xi),p\rangle-g(\tau,\xi,u(\tau,\xi))\right)\geq 0. \label{E:viscosity001}
\end{align}
\end{itemize}
Finally, a random field $u \in \mathcal{C}\left(\mathbf{F}^B,
[0,T]\times \overline{\Theta}\right)$ is called a stochastic
viscosity solution of $\mathcal{SPVI}^{(f,g,h,\chi,\varphi,\psi)}$
\eqref{SPVI} if it is both a stochastic viscosity subsolution and a
stochastic viscosity supersolution.
\end{definition}
\begin{remark}
Observe that if $f$ and $g$ are deterministic and $h\equiv 0$,
Definition\, $\ref{defvisco}$ coincides with the definition of
(deterministic) viscosity solution of PVI given by Maticiuc and R\u{a}\c{s}canu in \cite{MR}.
\end{remark}
To end this section, we state the notion of random viscosity solution which will be a bridge
link to the stochastic viscosity solution and its deterministic counterpart.
\begin{definition}
A random field $u\in C({\bf F}^B, [0,T]\times\R^n)$ is called an $\omega$-wise viscosity solution if
for $\P$-almost all $\omega\in \Omega,\;  u(\omega,\cdot,\cdot)$ is a (deterministic) viscosity solution of $\mathcal{SPVI}^{(f,g,0,\chi,\varphi,\psi)}$.
\end{definition}

\subsection{Doss-Sussmann transformation}
In this section, using the Doss-Sussman transformation, our next goal
is to establish the existence of the stochastic viscosity solution
to $\mathcal{SPVI}^{(f,g,h,\chi,\varphi,\psi)}$ \eqref{SPVI}  by
means of backward doubly stochastic generalized variational
inequality.

As shown by the work of Buckdahn and Ma \cite{BM1,BM2}, the Doss transformation will depend heavily on
the following stochastic flow $\eta\in C({\bf F}^B, [0,
T]\times\R^n\times\R)$, defined as the unique solution of the
following stochastic differential equation in the Stratonovich
sense:
\begin{eqnarray}
\eta(t,x,y)&=&y+\int_t^T\langle h(s,x,\eta(s,x,y)),
\circ dB_s\rangle.\label{p1}
\end{eqnarray}
We refer the reader to their paper \cite{BM1} for a lucid discussion
on this topic. We also note that due the direction of backward Itô
integral, \eqref{p1} should be viewed as going from $T$ to $t$ (i.e
$y$ should be understood as the initial value). Under the assumption
$({H7})$, the mapping $y\mapsto \eta(t,x,y)$
 defines a diffeomorphism for all $(t,x),\; \P$-a.s. (see Protter \cite{Pr}). Let us denote its $y$-inverse
  by $\varepsilon(t,x,y)$. Then, one can show that $\varepsilon(t, x, y)$ is the solution to the following
  first-order SPDE:
\begin{eqnarray*}
\varepsilon(t,x,y)=y-\int_t^T\langle D_y\varepsilon(s,x,y),\, h(s,x,\eta(s,x,y))
\circ dB_s\rangle.
\end{eqnarray*}
Let us recall the following important proposition appeared in
\cite{BM} (see Lemma 4.8).
\begin{proposition}
Assume that the assumptions $({H1})$--$({H7})$ hold. Let
$(\tau,\xi)\in \mathcal{M}_{0,T}^B\times
L^2\left(\mathcal{F}^{B}_{\tau}; \Theta\right)$, $u\in
\mathcal{C}\left(\mathbf{F}^B, [0,T]\times\overline{\Theta}\right)$
and $(a_u,X_u,p_u)\in\mathcal{J}^{1,2,+}_{h} u(\tau,\xi)$. Define
$v(\cdot,\cdot) = \varepsilon(\cdot,\cdot, u(\cdot,\cdot))$. Then,
for any $(\tau,\xi)$-approximating sequence $(\tau_k,\xi_k)$, and
for $\P$-a.e. $\omega$, it holds that
\begin{eqnarray*}
v(\tau_k,\xi_k)&\leq& v(\tau,\xi)+a_v(\tau_k-\tau)+b_v(B_{\tau_k}-B_{\tau})+\langle p_v,\xi_k-\xi\rangle\nonumber\\
&&+\langle q_v,\xi_k-\xi\rangle(B_{\tau_k}-B_{\tau})+\frac{1}{2}\langle X_v(\xi_k-\xi),\xi_k-\xi\rangle\nonumber\\
&&+o(|\tau_k-\tau|)+o(|\xi_k-\xi|^2).\label{jet}
\end{eqnarray*}
where
\begin{eqnarray*}
\left\{
\begin{array}{l}
a_v=D_y\varepsilon(\tau,\xi,u(\tau,\xi))a_u\\\\
p_v=D_y\varepsilon(\tau,\xi,u(\tau,\xi))p_u+D_x\varepsilon(\tau,\xi,u(\tau,\xi))\\\\
X_v=D_y\varepsilon(\tau,\xi,u(\tau,\xi))X_u+2D_{xy}\varepsilon(\tau,\xi,u(\tau,\xi))p^*_u+D_{xx}\varepsilon(\tau,\xi,u(\tau,\xi))
+D_{yy}\varepsilon(\tau,\xi,u(\tau,\xi))p_up_u^*
\end{array}\right.
\end{eqnarray*}
Namely, $(a_v,X_v,p_v)\in\mathcal{J}^{1,2,+}_{0} v(\tau,\xi)$

Conversely, let $(\tau,\xi)\in \mathcal{M}_{0,T}^B\times
L^2\left(\mathcal{F}^{B}_{\tau}; \Theta\right)$, $v\in
\mathcal{C}\left(\mathbf{F}^B, [0,T]\times\overline{\Theta}\right)$
and $(a_v,X_v,p_v)\in\mathcal{J}^{1,2,+}_{0} v(\tau,\xi)$. Define
$u(\cdot,\cdot) = \eta(\cdot,\cdot, v(\cdot,\cdot))$. Then, the
triplet $(a_u,X_u,p_u)$ given by
\begin{eqnarray*}
\left\{
\begin{array}{l}
a_u=D_y\eta(\tau,\xi,v(\tau,\xi))a_v\\\\
p_u=D_y\eta(\tau,\xi,v(\tau,\xi))p_v+D_x\eta(\tau,\xi,v(\tau,\xi))\\\\
X_u=D_y\eta(\tau,\xi,v(\tau,\xi))X_v+2D_{xy}\eta(\tau,\xi,v(\tau,\xi))p^*_v+D_{xx}\eta(\tau,\xi,v(\tau,\xi))
+D_{yy}\eta(\tau,\xi,v(\tau,\xi))p_vp_v^*
\end{array}\right.
\end{eqnarray*}
satisfies $(a_u,X_u,p_u)\in\mathcal{J}^{1,2,+}_{h} u(\tau,\xi)$.
\end{proposition}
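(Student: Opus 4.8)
We only outline the argument, which is the Doss--Sussmann computation of Buckdahn and Ma \cite{BM} (Lemma~4.8) adapted to the present setting; the convex data $\varphi,\psi$ play no role, since the statement is a pointwise identity relating the jets of $u$ and of $v=\varepsilon(\cdot,\cdot,u(\cdot,\cdot))$. \emph{First}, I would record the facts needed about $\eta$ and its $y$-inverse $\varepsilon$. Under $({H7})$, classical stochastic flow theory (Kunita \cite{Kunita}, Protter \cite{Pr}) gives, for $\P$-a.e. $\omega$, that $(t,x,y)\mapsto\eta(t,x,y)$ and $(t,x,y)\mapsto\varepsilon(t,x,y)$ are twice continuously differentiable in $(x,y)$ with locally bounded derivatives; that $D_y\eta$ solves the linear backward Stratonovich equation $D_y\eta(t,x,y)=1+\int_t^T\langle\partial_yh(s,x,\eta(s,x,y))\,D_y\eta(s,x,y),\circ dB_s\rangle$ and is therefore strictly positive, whence $D_y\varepsilon(t,x,y)=1/D_y\eta(t,x,\varepsilon(t,x,y))>0$; and that $\varepsilon$ solves the first-order SPDE $\varepsilon(t,x,y)=y-\int_t^T\langle D_y\varepsilon(s,x,y),\,h(s,x,\eta(s,x,y))\circ dB_s\rangle$ recalled above. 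Writing $y_0:=u(\tau,\xi)$ we have $v(\tau,\xi)=\varepsilon(\tau,\xi,y_0)$ and $u(\tau,\xi)=\eta(\tau,\xi,v(\tau,\xi))$.

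\emph{Two-stage expansion.} Fix a $(\tau,\xi)$-approximating sequence $(\tau_k,\xi_k)$ and split
\[
v(\tau_k,\xi_k)-v(\tau,\xi)=\big[\varepsilon(\tau_k,\xi_k,u(\tau_k,\xi_k))-\varepsilon(\tau_k,\xi_k,y_0)\big]+\big[\varepsilon(\tau_k,\xi_k,y_0)-\varepsilon(\tau,\xi,y_0)\big].
\]
For the first bracket I Taylor expand $\varepsilon(\tau_k,\xi_k,\cdot)$ to second order at $y_0$ and insert the $h$-superjet inequality $(a_u,X_u,p_u)\in\mathcal{J}^{1,2,+}_h u(\tau,\xi)$ for $u(\tau_k,\xi_k)-y_0$; since $D_y\varepsilon>0$ the inequality sense is preserved, and the quadratic term $\tfrac12 D_{yy}\varepsilon\,\big(b(B_{\tau_k}-B_\tau)\big)^2$ together with $\tfrac12 D_y\varepsilon\,c$ (with $b,c$ the auxiliary data of the superjet) produces a $(\tau_k-\tau)$ contribution. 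For the second bracket, the pure-flow increment at frozen $y_0$, I use the $\varepsilon$-SPDE to write the $t$-increment as a multiple of $B_{\tau_k}-B_\tau$ plus its Stratonovich drift in $(\tau_k-\tau)$, and a spatial Taylor expansion of $\varepsilon(\tau_k,\cdot,y_0)$ at $\xi$ to produce the terms in $\langle D_x\varepsilon,\xi_k-\xi\rangle$, $\langle D_{xx}\varepsilon(\xi_k-\xi),\xi_k-\xi\rangle$ and $\langle D_{xy}\varepsilon\,(\cdot),\xi_k-\xi\rangle(B_{\tau_k}-B_\tau)$, plus the cross contributions coming from expanding $D_y\varepsilon(\tau_k,\xi_k,y_0)$ around $D_y\varepsilon(\tau,\xi,y_0)$ against the leading $\langle p_u,\xi_k-\xi\rangle$ term of the $u$-jet.

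\emph{Collecting terms and the converse.} Grouping by type ($\tau_k-\tau$, $B_{\tau_k}-B_\tau$, $\xi_k-\xi$, $(\xi_k-\xi)(B_{\tau_k}-B_\tau)$, $|\xi_k-\xi|^2$) one reads off the coefficients. The Stratonovich drift from the $\varepsilon$-SPDE cancels against $\tfrac12 D_{yy}\varepsilon\,b^2+\tfrac12 D_y\varepsilon\,c$ --- this cancellation is exactly the point of the Doss--Sussmann change of variables --- so that the coefficient of $\tau_k-\tau$ reduces to $D_y\varepsilon(\tau,\xi,u(\tau,\xi))\,a_u=:a_v$, the coefficient of $\xi_k-\xi$ to $D_y\varepsilon(\tau,\xi,u(\tau,\xi))\,p_u+D_x\varepsilon(\tau,\xi,u(\tau,\xi))=:p_v$, and the full second-order contribution in $\xi_k-\xi$, assembled from $D_y\varepsilon\,X_u$, $2D_{xy}\varepsilon\,p_u^*$, $D_{xx}\varepsilon$ and $D_{yy}\varepsilon\,p_up_u^*$, to $X_v$; likewise for $b_v$ and $q_v$. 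Since $D_y\varepsilon=(D_y\eta)^{-1}$, $D_x\varepsilon=-D_x\eta\,(D_y\eta)^{-1}$, etc., evaluated at $v(\tau,\xi)$, these formulas rewrite in the $\eta$-form of the statement. The converse is obtained by the same computation with $\eta$ and $\varepsilon$ interchanged and the trivial flow replaced by $h$: from $(a_v,X_v,p_v)\in\mathcal{J}^{1,2,+}_0 v(\tau,\xi)$ and $u=\eta(\cdot,\cdot,v(\cdot,\cdot))$ one Taylor expands $\eta(\tau_k,\xi_k,\cdot)$ at $v(\tau,\xi)$, inserts the $0$-superjet inequality (again $D_y\eta>0$), adds the pure-flow increment of $\eta$, which now generates the genuine $h$-data $b,c,q$, and collects.

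\emph{Main obstacle.} The technical core is the error control: one must show that all terms not displayed above are $o(|\tau_k-\tau|)+o(|\xi_k-\xi|^2)$, $\P$-a.s. This uses the locally uniform bounds on the derivatives of $\eta,\varepsilon$ up to the needed order (from $({H7})$ and the flow estimates), the It\^o--Stratonovich conversion of the backward integral $\int_\tau^{\tau_k}(\cdot)\circ dB$ over the shrinking random interval together with the one-sided approach $\tau_k\uparrow\tau$ or $\tau_k\downarrow\tau$ of an approximating sequence (which is what lets the quadratic-variation part be absorbed into a $(\tau_k-\tau)$-drift and controls the residual terms $(B_{\tau_k}-B_\tau)^3$, $(\tau_k-\tau)(B_{\tau_k}-B_\tau)$, $|\xi_k-\xi|\,|B_{\tau_k}-B_\tau|$, and so on), and the convergence $\xi_k\to\xi$. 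These are precisely the estimates carried out by Buckdahn and Ma in their Lemma~4.8, which one invokes here essentially verbatim, the only modification being the extra Stieltjes data, which does not enter the jet calculus.
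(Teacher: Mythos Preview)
The paper does not prove this proposition at all: it is introduced with the sentence ``Let us recall the following important proposition appeared in \cite{BM} (see Lemma~4.8)'' and is stated without proof, so there is no argument in the paper to compare your proposal against. Your outline --- a two-stage Taylor expansion of $\varepsilon(\tau_k,\xi_k,u(\tau_k,\xi_k))$, inserting the $h$-superjet inequality for the $y$-increment and the $\varepsilon$-SPDE for the $(t,x)$-increment, then collecting terms and invoking the Stratonovich cancellation --- is exactly the computation carried out in the cited reference, so your approach is correct and coincides with the literature the paper defers to.

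One small point: in this paper \cite{BM} is Boufoussi--Mrhardy, not Buckdahn--Ma (the latter is \cite{BMa}); the Lemma~4.8 you invoke is in Boufoussi--Mrhardy, which itself adapts the Buckdahn--Ma jet calculus from \cite{BMa}. Also, your remark that ``the only modification being the extra Stieltjes data, which does not enter the jet calculus'' is right and is precisely why the paper can quote the result verbatim: the increasing process $A$ appears only in the BDSGVI and the boundary operator, not in the flow $\eta$ or its inverse $\varepsilon$, so the jet transformation is literally unchanged from the non-generalized setting.
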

Following the key ideas of Buckdahn and Ma, our aim is  to convert a
SPVI to a PVI with random coefficients with the Doss-Sussman
transformation so that the stochastic viscosity solution can be
studied $\omega$-wisely. However, our resulting equation from
$\mathcal{SPVI}^{(f,g,h,\chi,\varphi,\psi)}$ \eqref{SPVI} due to
Doss-Sussman transformation is not necessarily the PVI studied by
Maticiuc and R\u{a}\c{s}canu in \cite{MR}. Therefore, we will need
the following version of Doss-Sussman transformation.
\begin{corollary}\label{corollary4.8}
Assume that the assumptions $({H1})$--$({H7})$ hold. Let
$(\tau,\xi)\in \mathcal{M}_{0,T}^B\times
L^2\left(\mathcal{F}^{B}_{\tau}; \Theta\right)$, $u\in
\mathcal{C}\left(\mathbf{F}^B, [0,T]\times\overline{\Theta}\right)$
and  define $v(\cdot,\cdot) = \varepsilon(\cdot,\cdot,
u(\cdot,\cdot))$.
\begin{description}
\item If $(a_u,X_u,p_u)\in\mathcal{J}^{1,2,+}_{h}
u(\tau,\xi)$, then $u$ satisfies \eqref{E:def1} and
\eqref{E:viscosity01} if and only if $v(\cdot,\cdot)$ satisfies that
\begin{itemize}
\item[\rm(a)] on the event $\left\{0<\tau<T\right\}\cap\left\{\xi\in
\Theta\right\}$
\begin{equation}\label{E:def2}
V_{\widetilde{f}}(\tau,\xi,a_v,p_v,X_v)+\frac{\varphi'_l(\eta(\tau,\xi,v(\tau,\xi))}{D_y\eta(\tau,\xi,v(\tau,\xi))}\leq 0;
\end{equation}
\item[\rm(b)] on the event $\left\{0<\tau<T\right\}\cap\left\{\xi\in
\partial \Theta\right\}$

$\displaystyle
\min\Bigg(V_{\widetilde{f}}(\tau,\xi,a_v,p_v,X_v)+\frac{\varphi'_l
(\eta(\tau,\xi,v(\tau,\xi))}{D_y\eta(\tau,\xi,v(\tau,\xi))},$
\begin{align}
\langle\nabla
\phi(\xi),p_v\rangle-\widetilde{g}(\tau,\xi,u(\tau,\xi))+\frac{\psi'_l
(\eta(\tau,\xi,v(\tau,\xi))}{D_y\eta(\tau,\xi,v(\tau,\xi))}\Bigg)\leq
0, \label{E:viscosity02}
\end{align}
\end{itemize}
\item where $(a_v, p_v,X_v)$ appear in Proposition 4.7 and functions $\widetilde{f}$ and $\widetilde{g}$
 will be defined in the proof.
\item If $(a_u,X_u,p_u)\in\mathcal{J}^{1,2,-}_{h} u(\tau,\xi)$, then $u$
 satisfies \eqref{E:def01} and \eqref{E:viscosity001} if and only if $v(\cdot,\cdot)$
 satisfies that
\begin{itemize}
\item[\rm(a)] on the event $\left\{0<\tau<T\right\}\cap\left\{\xi\in
\Theta\right\}$
\begin{equation}\label{E:def02}
V_{\widetilde{f}}(\tau,\xi,a_v,p_v,X_v)+\frac{\varphi'_l(\eta(\tau,\xi,v(\tau,\xi))}{D_y\eta(\tau,\xi,v(\tau,\xi))}\geq 0;
\end{equation}
\item[\rm(b)] on the event $\left\{0<\tau<T\right\}\cap\left\{\xi\in
\partial \Theta\right\}$

$\displaystyle
\max\Bigg(V_{\widetilde{f}}(\tau,\xi,a_v,p_v,X_v)+\frac{\varphi'_l
(\eta(\tau,\xi,v(\tau,\xi))}{D_y\eta(\tau,\xi,v(\tau,\xi))},$
\begin{align}
\langle\nabla
\phi(\xi),p_v\rangle-\widetilde{g}(\tau,\xi,u(\tau,\xi))
+\frac{\psi'_l(\eta(\tau,\xi,v(\tau,\xi))}{D_y\eta(\tau,\xi,v(\tau,\xi))}\Bigg)\geq
0, \label{E:viscosity002}
\end{align}
\end{itemize}
\end{description}
\end{corollary}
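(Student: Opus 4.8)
The plan is to reduce the claim, jet by jet, to Proposition 4.7 together with the single elementary fact that $D_y\eta$ is a strictly positive scalar, so that multiplying an inequality by it preserves its sign and commutes with $\min$ and $\max$. Throughout I would fix $(\tau,\xi)$, write $u=u(\tau,\xi)$, $v=v(\tau,\xi)$, so that $u=\eta(\tau,\xi,v)$, and evaluate every derivative of $\eta$ at $(\tau,\xi,v)$. First I would record the positivity: differentiating \eqref{p1} in $y$ shows that $y\mapsto D_y\eta(t,x,y)$ solves the linearisation of \eqref{p1}, whence $D_y\eta(t,x,y)=\exp\big(\int_t^T\partial_uh(s,x,\eta(s,x,y))\circ dB_s\big)>0$, $\P$-a.s. (this also re-proves that $y\mapsto\eta(t,x,y)$ is a diffeomorphism). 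Since $\partial\varphi(u)=[\varphi'_l(u),\varphi'_r(u)]$ and $\partial\psi(u)=[\psi'_l(u),\psi'_r(u)]$, dividing a subdifferential term by $D_y\eta>0$ is precisely what turns $\varphi'_l(u)$ into $\varphi'_l(\eta(\tau,\xi,v))/D_y\eta$ and $\psi'_l(u)$ into $\psi'_l(\eta(\tau,\xi,v))/D_y\eta$ in \eqref{E:def2} and \eqref{E:viscosity02}.

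Next I would carry out the substitution. Given $(a_u,X_u,p_u)\in\mathcal{J}^{1,2,+}_{h}u(\tau,\xi)$, let $(a_v,X_v,p_v)$ be the associated triplet of Proposition 4.7, so that $(a_v,X_v,p_v)\in\mathcal{J}^{1,2,+}_{0}v(\tau,\xi)$ and $a_u=D_y\eta\,a_v$, $p_u=D_y\eta\,p_v+D_x\eta$, $X_u=D_y\eta\,X_v+2D_{xy}\eta\,p_v^*+D_{xx}\eta+D_{yy}\eta\,p_vp_v^*$. Inserting these into $V_f(\tau,\xi,a_u,p_u,X_u)$ and into the Neumann expression $\langle\nabla\phi(\xi),p_u\rangle-g(\tau,\xi,u)+\psi'_l(u)$, and using that $\eta$ solves \eqref{p1} (so that the $dB$-contributions produced by the composition $u=\eta(\cdot,\cdot,v)$ cancel, while the $\tfrac12(h\partial_uh)$ drift generated by the dynamics of $\eta$ combines with the term $-\tfrac12(h\partial_uh)(\tau,\xi,u)$ present in \eqref{E:def1}), I would obtain, after collecting terms and factoring out $D_y\eta>0$, the pointwise identities $V_f(\tau,\xi,a_u,p_u,X_u)+\varphi'_l(u)-\tfrac12(h\partial_uh)(\tau,\xi,u)=D_y\eta\big(V_{\widetilde f}(\tau,\xi,a_v,p_v,X_v)+\varphi'_l(\eta(\tau,\xi,v))/D_y\eta\big)$ and $\langle\nabla\phi(\xi),p_u\rangle-g(\tau,\xi,u)+\psi'_l(u)=D_y\eta\big(\langle\nabla\phi(\xi),p_v\rangle-\widetilde g(\tau,\xi,u)+\psi'_l(\eta(\tau,\xi,v))/D_y\eta\big)$, where I set $\widetilde g(\tau,\xi,u):=\big(g(\tau,\xi,\eta(\tau,\xi,v))-\langle\nabla\phi(\xi),D_x\eta\rangle\big)/D_y\eta$ and let $\widetilde f$ be the random coefficient built from $f$, $b$, $\sigma$, $h$ and the first- and second-order derivatives of $\eta$ that gathers $f$ at the transformed arguments, the second-order terms $\tfrac12{\rm Trace}\big(\sigma\sigma^*(\xi)(2D_{xy}\eta\,p_v^*+D_{xx}\eta+D_{yy}\eta\,p_vp_v^*)\big)$, the first-order term $\langle D_x\eta,b(\xi)\rangle$ and the correction $\tfrac12(h\partial_uh)(\tau,\xi,\eta(\tau,\xi,v))$, all divided by $D_y\eta$.

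With these two identities the conclusion is routine. On the event $\{0<\tau<T\}\cap\{\xi\in\Theta\}$, dividing \eqref{E:def1} by $D_y\eta>0$ and using the first identity gives exactly \eqref{E:def2}, and conversely multiplying \eqref{E:def2} by $D_y\eta$ recovers \eqref{E:def1}. On $\{0<\tau<T\}\cap\{\xi\in\partial\Theta\}$, since $\min(D_y\eta\,A,D_y\eta\,B)=D_y\eta\min(A,B)$ when $D_y\eta>0$, applying this with $A$, $B$ the two parenthesised quantities above shows that \eqref{E:viscosity01} and \eqref{E:viscosity02} are equivalent; this settles the first bullet. For the second bullet I would run the same argument starting from $(a_u,X_u,p_u)\in\mathcal{J}^{1,2,-}_{h}u(\tau,\xi)$, using the (entirely analogous) subjet version of Proposition 4.7, reversing every inequality and replacing $\min$ by $\max$ (again legitimate since $D_y\eta>0$), which converts \eqref{E:def01} and \eqref{E:viscosity001} into \eqref{E:def02} and \eqref{E:viscosity002}. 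The hard part is none of this bookkeeping of events and signs but the computation producing the two identities, and in particular $\widetilde f$: one must track every second-order contribution $D_{xx}\eta$, $D_{xy}\eta$, $D_{yy}\eta$ created by composing with $\eta$ and verify that the $dB$-terms really cancel, so that the transformed problem is a genuine random-coefficient PVI and no white noise survives; once this is secured, everything else reduces to the observation that $D_y\eta$ is a strictly positive scalar.
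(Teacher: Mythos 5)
Your proposal is correct and follows essentially the same route as the paper: invoke Proposition 4.7 to transport the jet, substitute the relations $a_u=D_y\eta\,a_v$, $p_u=D_y\eta\,p_v+D_x\eta$, $X_u=D_y\eta\,X_v+\cdots$ into $V_f$ and the Neumann expression, absorb the leftover first- and second-order derivatives of $\eta$ and the $\tfrac12(h\partial_u h)$ correction into $\widetilde f$ and $\widetilde g$, and use $D_y\eta>0$ to divide the inequalities and commute with $\min$/$\max$. Your explicit justification of $D_y\eta>0$ via the exponential representation of the linearized flow, and your careful tracking of the sign of the $\tfrac12(h\partial_u h)$ term so that it actually cancels in the identity, are if anything slightly more precise than the paper's own write-up.
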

\begin{proof}
Let $(\tau,\xi)\in \mathcal{M}_{0,T}^B\times
L^2\left(\mathcal{F}^{B}_{\tau}; \Theta\right)$ be given and $(a_u,
p_u,X_u)\in\mathcal{J}^{1,2,+}_{h}u(\tau,\xi)$. We assume that $u$
is a stochastic subsolution of
$\mathcal{SPVI}^{(f,g,h,\chi,\varphi,\psi)}$ \eqref{SPVI}, which
means that
\begin{eqnarray*}
u(\tau,\xi)&\in & {\rm Dom}(\varphi),\;\;\;\;\;\; \forall\; (\tau,\xi)\in\mathcal{M}_{0,T}^B\times L^2\left(\mathcal{F}^{B}_{\tau};\Theta\right),\;\;\; \P\mbox{-a.s.},\\
u(\tau,\xi)&\in & {\rm Dom}(\psi),\;\;\;\;\;\; \forall\;
(\tau,\xi)\in\mathcal{M}_{0,T}^B\times
L^2\left(\mathcal{F}^{B}_{\tau};\partial\Theta\right),\;\;\;
\P\mbox{-a.s.},
\end{eqnarray*}
and at any $(\tau,\xi)\in\mathcal{M}_{0,T}^B\times L^2\left(\mathcal{F}^{B}_{\tau};\Theta\right)$, it holds $\P$-a.s.
\begin{itemize}
\item[(a)] on the event $\left\{0<\tau<T\right\}\cap\left\{\xi\in\Theta\right\}$
\begin{equation*}
V_{f}(\tau,\xi,a_u,p_u,X_u)+\varphi'_l(u(\tau,\xi)-\frac{1}{2}(h\partial_uh)(\tau,\xi,u(\tau,\xi))\leq 0;
\end{equation*}
\item[(b)] on the event $\left\{0<\tau<T\right\}\cap\left\{\xi\in
\partial \Theta\right\}$

$\displaystyle\min\Big(V_{f}(\tau,\xi,a_u,p_u,X_u)+\varphi'_l(u(\tau,\xi)-\frac{1}{2}(h\partial_u
h)(\tau,\xi,u(\tau,\xi)),$
\begin{align*}
\langle\nabla
\phi(\xi),p_u\rangle-g(\tau,\xi,u(\tau,\xi))+\psi'_l(u(\tau,\xi))\Big)\leq
0. \label{E:viscosity01}
\end{align*}
\end{itemize}
Then, according to Proposition 4.7, there exist $(a_v,
p_v,X_v)\in\mathcal{J}^{1,2,+}_{0}v(\tau,\xi)$, such that on the
event $\left\{0<\tau<T\right\}\cap\left\{\xi\in\Theta\right\},$
\begin{eqnarray*}
&&-D_y\eta(\tau,\xi,v(\tau,\xi))a_v- D_y\eta(\tau,\xi,v(\tau,\xi))\frac{1}{2}Tr(\sigma\sigma^*(\xi)X_v)\\
&&-\frac{1}{2}Tr(\sigma\sigma^*(\xi)D_{xx}\eta(\tau,\xi,v(\tau,\xi))-\frac{1}{2}D_{yy}\eta(\tau,\xi,v(\tau,\xi))|\sigma^*(\xi)p_v|^2\\
&&-\langle\sigma^{*}(\xi)D_{xy}\eta(\tau,\xi,v(\tau,\xi)),\sigma^*(\xi)p_v\rangle-\langle D_{x}\eta(\tau,\xi,v(\tau,\xi)),b(\xi)\rangle\\
&&-f(\tau, \xi, \eta(\tau,\xi,v(\tau,\xi)),\sigma^{*}(\xi)D_{x}\eta(\tau,\xi,v(\tau,\xi))+D_y\eta(\tau,\xi,v(\tau,\xi))\sigma^*(\xi)p_v)\\
&&-\langle D_{y}\eta(\tau,\xi,v(\tau,\xi))p_v,b(\xi)\rangle\\
&\leq&
-\varphi'_{l}(\eta(\tau,\xi,v(\tau,\xi)))+\frac{1}{2}(h\partial
h)(\tau,\xi,\eta(\tau,\xi,v(\tau,\xi))).
\end{eqnarray*}
Since $D_y\eta(t,x,y)>0,\; \forall\, (t,x,y)$ we define the random field $\widetilde{f}$ by
\begin{eqnarray*}
\widetilde{f}(t,x,y,z)&=&\frac{1}{D_y\eta(t,x,y)}\left[f(t,x,\eta(t,x,y),\sigma^{*}(x)D_x\eta(t,x,y)+D_y\eta(t,x,y)z)-\frac{1}{2}(h\partial_u h)(t,x,\eta(t,x,y))\right.\\
&&\left.+L_x\eta(t,x,y)+\langle\sigma^*(x)D_{xy}\eta(t,x,y),z\rangle+\frac{1}{2}D_{yy}\eta(t,x,y)|z|^2\right].
\end{eqnarray*}
We obtain
\begin{eqnarray*}
V_{\widetilde{f}}(\tau,\xi,a_v,p_v,X_v)+\frac{\varphi'_{l}(\eta(\tau,\xi,v(\tau,\xi)))}{D_y\eta(\tau,\xi,v(\tau,\xi))}\leq 0
\end{eqnarray*}
and on the event $\left\{0<\tau<T\right\}\cap\left\{\xi\in
\partial \Theta\right\}$

$\displaystyle \langle\nabla
\phi(\xi),p_u\rangle-g(\tau,\xi,u(\tau,\xi))+\psi'_l(u(\tau,\xi))$
\begin{eqnarray*}
&=&\langle\nabla \phi(\xi),D_{y}\eta(\tau,\xi,v(\tau,\xi))p_v\rangle
+\langle\nabla \phi(\xi),D_{x}\eta(\tau,\xi,v(\tau,\xi))\rangle\\
&&-g(\tau,\xi,\eta(\tau,\xi,v(\tau,\xi)))+\psi'_l(\eta(\tau,\xi,v(\tau,\xi)))\\
&=&D_{y}\eta(\tau,\xi,v(\tau,\xi))\langle\nabla \phi(\xi),p_v\rangle\\
&&-D_{y}\eta(\tau,\xi,v(\tau,\xi))\widetilde{g}(\tau,\xi,\eta(\tau,\xi,v(\tau,\xi))),
\end{eqnarray*}
where
\begin{eqnarray*}
\widetilde{g}(t,x,y)=\frac{1}{D_{y}\eta(t,x,y)}(g(t,x,y)-\langle\nabla\phi(\xi),D_{x}\eta(t,x,y)\rangle).
\end{eqnarray*}
Recall again that $D_y\eta(t,x,y)>0,\; \forall\, (t,x,y)$, we get
\begin{eqnarray*}
\min\left(V_{\widetilde{f}}(\tau,\xi,a_v,p_v,X_v)+\frac{\varphi'_l(\eta(\tau,\xi,v(\tau,\xi)))}
{D_y\eta(\tau,\xi,v(\tau,\xi))}
,\langle \nabla\phi(\xi),D_x\eta(\tau,\xi,v(\tau,\xi)p_v)\rangle\right.\\\\
\left.-\widetilde{g}(\tau,\xi,v(\tau,\xi))+\frac{\psi'_l(\eta(\tau,\xi,v(\tau,\xi)))}{D_y\eta(\tau,\xi,v(\tau,\xi))}\right)&\leq& 0.
\end{eqnarray*}

\end{proof}
\section{Probabilistic representation result for stochastic viscosity solution to SPVI}
The main objective of this section is to show how a semi-linear SPVI
with Neumann-Dirichlet condition associated to the coefficients
$(f,h,g,\chi,\varphi,\psi)$ is related to BDSGVI \eqref{equation1}
in the Markov framework.

We now introduce a class of reflected diffusion processes. Let us recall $\Theta$ be an
open connected bounded subset of $\R^d$, which is such that for a
function $\phi\in C^{2}_{b}(\R^d),\ \Theta=\{\phi>0\},\; \partial\Theta=\{\phi=0\}$. It follows from the results in Lions, Sznitman [6] (see also Saisho [10])
that for each $(t,x)\in[0,T]\times\overline{\Theta}$ there exists a unique pair of ${\bf F}^{W}$ progressively measurable continuous processes $\{X^{t,x}_s,A^{t,x}_s;  s\geq 0\}$, with values in
$\overline{\Theta}\times\R_+$, such that
:\allowdisplaybreaks
\begin{eqnarray}
X_s^{t,x} = x+\int^{s\vee t}_t b\left(X_r^{t,x}\right)\,{\rm
d}r+\int^{s\vee t}_t
\sigma\left(X_r^{t,x}\right)\,{\rm
d}{W}_r+\int^{s\vee t}_t \nabla \phi
\left(X_r^{t,x}\right)\,{\rm
d}A_r^{t,x}, \quad
\forall\, s\in [0,T].\label{rSDE}
\end{eqnarray}
Let notice that the above assumptions imply that there exists
a constant $\alpha >0$ such that for any $x\in\partial\Theta,\, x'\in\Theta$
\begin{eqnarray}
|x-x'|^2+\alpha\langle x'-x,\phi(x)\rangle\geq 0.\label{boundaryineq}
\end{eqnarray}
We have

\begin{proposition}\label{P:continuity00}
There exists a constant $ C>0 $ such that for all $0\leq t<t'\leq T$
and $x,\,x'\in \overline{\Theta}$,\  the following inequalities
hold: for any $p>4$
\begin{eqnarray}
\mathbb{E}\left[\sup_{0\leq s\leq
T}\left|X^{t,x}_{s}-X^{t',x'}_{s}\right|^p\right] \leq
C\left[ |t'-t|^{p/2}+|x-x'|^{p}\right]\label{continuity1}
\end{eqnarray}
and
\begin{eqnarray}
\mathbb{E}\left[\sup_{0\leq s\leq
T}\left|A_s^{t,x}-A_{s}^{t',x'}\right|^p\right]\leq C\left[
|t'-t|^{p/2}+|x-x'|^{p}\right].\label{continuity2}
\end{eqnarray}
Moreover, for all $p\geq 1$,
there exists a positive constant $C_p$ such that for all
$(t,x)\in[0,T]\times \overline{\Theta}$,
\begin{eqnarray}
\mathbb{E}\left(\left|A_s^{t,x}\right|^p \right) \leq C_p(1+t^p)\label{bound1}
\end{eqnarray} and
for each $\mu$, $t<s<T$, there exists a positive constant $C(\mu,t)$
such that for all $x\in\overline{\Theta}$,
\begin{eqnarray}
\mathbb{E}\left(\displaystyle e^{\mu A_{s}^{t,x}}\right) \leq C(\mu,t).\label{bound2}
\end{eqnarray}
\end{proposition}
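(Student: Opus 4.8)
The plan is to deduce all four estimates from It\^o's formula, using systematically the boundary inequality \eqref{boundaryineq} to control the reflection (local time) terms and the fact that $|\nabla\phi|=1$ on $\partial\Theta$, so that $\int_t^s\langle\nabla\phi(X^{t,x}_r),\nabla\phi(X^{t,x}_r)\rangle\,dA^{t,x}_r=A^{t,x}_s$ (the measure $dA^{t,x}$ charging only $\{X^{t,x}_r\in\partial\Theta\}$). I would prove them in the order \eqref{bound1}, \eqref{bound2}, \eqref{continuity1}, \eqref{continuity2}: the exponential bound \eqref{bound2} is needed to turn a weighted \emph{a priori} estimate into \eqref{continuity1}, and \eqref{continuity2} then follows from \eqref{continuity1}.

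First I would apply It\^o's formula to $\phi(X^{t,x}_s)$. Using $|\nabla\phi|^2=1$ on the support of $dA^{t,x}$ this yields the representation
$$A^{t,x}_s=\phi(X^{t,x}_s)-\phi(x)-\int_t^s\Big(\langle\nabla\phi,b\rangle+\tfrac12{\rm Tr}(\sigma\sigma^*D^2\phi)\Big)(X^{t,x}_r)\,dr-M^{t,x}_s,\qquad M^{t,x}_s:=\int_t^s(\sigma^*\nabla\phi)(X^{t,x}_r)\,dW_r.$$
Since $\overline{\Theta}$ is compact, $\phi\in\mathcal{C}^2_b$ and $b,\sigma$ are continuous, the drift bracket is bounded and $\langle M^{t,x}\rangle_s\le C(s-t)\le CT$; in particular $|A^{t,x}_s+M^{t,x}_s|\le C(1+T)$. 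Estimate \eqref{bound1} is then immediate from the Burkholder--Davis--Gundy inequality. For \eqref{bound2} one writes $e^{\mu A^{t,x}_s}\le e^{\mu C(1+T)}e^{-\mu M^{t,x}_s}$ and $\mathbb{E}[e^{-\mu M^{t,x}_s}]=\mathbb{E}[\mathcal{E}(-\mu M^{t,x})_s\,e^{\frac12\mu^2\langle M^{t,x}\rangle_s}]\le e^{\frac12\mu^2CT}$, the Dol\'eans exponential $\mathcal{E}(-\mu M^{t,x})$ being a true martingale because its quadratic variation is bounded (Novikov's criterion).

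Next, for \eqref{continuity1} I would first treat $t=t'$. With $\Delta_s:=X^{t,x}_s-X^{t,x'}_s$ and $V_s:=A^{t,x}_s+A^{t,x'}_s$, apply It\^o's formula to $e^{-\beta V_s-cs}|\Delta_s|^p$. The reflection terms are $\le C|\Delta_s|^p\,dV_s$ by \eqref{boundaryineq} (invoked once with $X^{t,x}_r$ on $\partial\Theta$ and once with $X^{t,x'}_r$ on $\partial\Theta$), hence are dominated by $-\beta|\Delta_s|^p\,dV_s$ for $\beta$ large; the Lipschitz continuity of $b$ and $\sigma$ makes the drift plus It\^o correction $\le C|\Delta_s|^p\,ds$, dominated by $-c|\Delta_s|^p\,ds$ for $c$ large. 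Taking suprema in $s$ and expectations, estimating the remaining martingale by the Burkholder--Davis--Gundy inequality together with $\|\sigma(X^{t,x})-\sigma(X^{t,x'})\|\le c|\Delta|$ and Young's inequality to absorb a $\tfrac14\mathbb{E}\sup$ term, one arrives at a Gronwall inequality for $r\mapsto\mathbb{E}\big[\sup_{u\le r}e^{-\beta V_u-cu}|\Delta_u|^p\big]$, whence $\mathbb{E}\big[\sup_{r\le s}e^{-\beta V_r-cr}|\Delta_r|^p\big]\le C|x-x'|^p$. To remove the weight, use that $V$ is increasing to write $\sup_r|\Delta_r|^p\le e^{cT}e^{\beta V_s}\sup_r\big(e^{-\beta V_r-cr}|\Delta_r|^p\big)$, then apply H\"older's inequality: the factor $e^{\beta V_s}$ is controlled in every $L^q$ by \eqref{bound2} (after a Cauchy--Schwarz split of $V_s=A^{t,x}_s+A^{t,x'}_s$), while the other factor is handled by rerunning the weighted estimate at exponent $pq'$ with $\beta$ enlarged. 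For general $t<t'$, the flow property $X^{t,x}_s=X^{t',X^{t,x}_{t'}}_s$ for $s\ge t'$ reduces matters, via the case just proved applied with the random initial datum $X^{t,x}_{t'}$ against $x'$ at time $t'$, to showing $\mathbb{E}\big[\sup_{t\le s\le t'}|X^{t,x}_s-x|^p\big]\le C|t'-t|^{p/2}$, which follows from the SDE, the Burkholder--Davis--Gundy inequality for the diffusion part, and the small-time estimate $\mathbb{E}\big[(A^{t,x}_{t'})^p\big]\le C|t'-t|^{p/2}$ for the reflection part, uniformly in $x\in\overline{\Theta}$. Finally, \eqref{continuity2} follows by differencing the It\^o representation of $A^{t,x}$ displayed above and estimating each term ($|\phi(X^{t,x}_s)-\phi(X^{t',x'}_s)|$, the drift difference, the martingale difference) by the $\mathcal{C}^2_b$ and Lipschitz bounds, \eqref{continuity1}, and the Burkholder--Davis--Gundy inequality, the pieces on $[t,t']$ producing the $|t'-t|^{p/2}$ term.

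The hard part will be the uniform-in-$x$ small-time control of the boundary local time, $\mathbb{E}\big[\sup_{t\le s\le t'}|A^{t,x}_s|^p\big]\le C|t'-t|^{p/2}$ for all $x\in\overline{\Theta}$: when $x\in\partial\Theta$ the local time begins to accumulate immediately, so this cannot come from the boundedness of $\overline{\Theta}$ alone and requires a genuine Skorokhod-map analysis of the one-dimensional process $\phi(X^{t,x})$ near $0$, along the lines of Lions--Sznitman and Saisho. A secondary but unavoidable technical chore is making the exponentially weighted computation rigorous -- localizing by stopping times so that the stochastic integrals are true martingales and the quantities absorbed on the right-hand side are a priori finite -- together with the bookkeeping in the H\"older/iteration step that strips off the weight $e^{\beta V_s}$.
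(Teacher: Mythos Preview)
Your argument is correct, but the paper takes a shorter route for \eqref{continuity1}. Instead of the unbounded weight $e^{-\beta V_s-cs}$ with $V_s=A^{t,x}_s+A^{t',x'}_s$, the paper (following Pardoux--Zhang \cite{PZ}) applies It\^o's formula directly to
\[
\exp\!\Big[-\tfrac{p}{\alpha}\big(\phi(X^{t,x}_s)+\phi(X^{t',x'}_s)\big)\Big]\,\big|X^{t,x}_s-X^{t',x'}_s\big|^{p}.
\]
The point is that $d\phi(X^{t,x}_s)$ already contains $|\nabla\phi|^2\,dA^{t,x}_s=dA^{t,x}_s$, so differentiating the exponential produces exactly the term $-\tfrac{p}{\alpha}|\Delta_s|^p\,dA^{t,x}_s$ needed to cancel the reflection contribution $p|\Delta_s|^{p-2}\langle\Delta_s,\nabla\phi(X^{t,x}_s)\rangle\,dA^{t,x}_s\le\tfrac{p}{\alpha}|\Delta_s|^p\,dA^{t,x}_s$ coming from \eqref{boundaryineq}. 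Crucially, since $\phi$ is bounded on $\overline{\Theta}$, this weight is bounded above and below by positive constants, so it can be removed for free and one lands directly on a Gronwall inequality for $\mathbb{E}\sup_s|\Delta_s|^p$. This eliminates the entire H\"older/iteration chore you describe, and with it the need to invoke \eqref{bound2} in the proof of \eqref{continuity1}. The paper then derives \eqref{continuity2} and \eqref{bound1} exactly as you do, from the It\^o representation $A^{t,x}_s=\phi(X^{t,x}_s)-\phi(x)-\int_t^{s}L\phi\,dr-\int_t^{s}(\nabla\phi)^*\sigma\,dW_r$.

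What your route buys is a self-contained derivation of \eqref{bound2} (the paper's proof actually does not spell this one out) and, more interestingly, an explicit mechanism for the $t\ne t'$ case via the flow property and the small-time local-time bound $\mathbb{E}\big[(A^{t,x}_{t'})^p\big]\le C|t'-t|^{p/2}$. Note that this ``hard part'' is lighter than you suggest: from the It\^o representation, $\phi(X^{t,x}_s)=\phi(x)+\int_t^s L\phi\,dr+M^{t,x}_s+A^{t,x}_s$ with $\phi(X^{t,x}_s)\ge 0$ and $dA^{t,x}$ supported on $\{\phi(X^{t,x})=0\}$, so Skorokhod's lemma gives $A^{t,x}_s=\sup_{t\le r\le s}\big(-\phi(x)-\int_t^r L\phi\,du-M^{t,x}_r\big)^+\le \|L\phi\|_\infty(s-t)+\sup_{t\le r\le s}|M^{t,x}_r|$, and BDG finishes. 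The paper's bounded-weight trick absorbs this step as well, which is why its proof is so brief.
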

\begin{proof}
This proof follows the similar argument used in \cite{PZ}.  We apply Itô's
formula to the semimartingale
\begin{eqnarray*}
\exp\left[-\frac{p}{\alpha}\left(\phi(X^{t,x}_s)+\phi(X^{t',x'}_s)\right)\right]\left|X^{t,x}_s-X^{t',x'}_s\right|^{p}.
\end{eqnarray*}
Hence exploit the inequality \eqref{boundaryineq} and standard SDE estimates we obtain
\begin{eqnarray*}
\E\left(\sup_{0\leq s\leq T}|X^{t,x}_s-X^{t',x'}_s|^p\right)\leq C\left(|t-t'|^{p/2}+|x-x'|^p+\E\int^{T}_{0}|X^{t,x}_s-X^{t',x'}_s|^pds\right).
\end{eqnarray*}
Then inequality \eqref{continuity1} follows from Gronwall's lemma. Next, by Itô formula we have
\begin{eqnarray*}
A^{t,x}_s=\phi(X^{t,x}_s-\phi(x)-\int_t^{t\vee s}L\phi(X^{t,x}_rdr-\int_t^{t\vee s}\nabla\phi(X^{t,x}_r\sigma(X^{t,x}_r)dW_r,
\end{eqnarray*}
where $L$ is defined by \eqref{operateur}. From this identity and  inequality \eqref{continuity1}, we deduce easily the the inequalities \eqref{continuity2} and \eqref{bound1}.
\end{proof}
Under assumptions $(H1)$--$(H7)$, it
follows from Theorem 3.2 that, for all $(t, x)\in[0, T
]\times\overline{\Theta}$, there exists a unique triplet $(Y^{t,x},
Z^{t,x}, U^{t,x},V^{t,x})$ such that
\begin{eqnarray}
Y^{t,x}_s+\int_s^TU^{t,x}_r\,{\rm
d}r+\int_s^TV^{t,x}_r\,{\rm
d}A^{t,x}_r& =&\chi(X^{t,x}_T)+\int_s^Tf(r,X^{t,x}_r,Y^{t,x}_r,Z^{t,x}_r)\,{\rm
d}r+\int_s^Tg(r,X^{t,x}_r,Y^{t,x}_r)\,{\rm
d}A^{t,x}_r\nonumber\\
&&+\int_s^Th(r,X^{t,x}_r,Y^{t,x}_r)\,{\rm d}B_r-\int_s^TZ^{t,x}_r\,{\rm d}W_r,\ t\leq s \leq T;\label{eqmarkov1}
\end{eqnarray}
and
\begin{eqnarray}
(Y^{t,x}_s,U^{t,x}_s)\in \partial \varphi, \ \,{\rm d}\overline{\mathbb{P}}\otimes
\,{\rm d}s, \ (Y^{t,x}_s,V^{t,x}_s)\in \partial \psi, \ \,{\rm d}\overline{\mathbb{P}}\otimes
dA(\bar{\omega})_s
  \mbox{-a.e. on}\ [t,T].\label{eqmarkov2}
\end{eqnarray}
We extend processes $Y^{t,x},\, Z^{t,x},\,U^{t,x},\, V^{t,x}$ on $[0,T]$ by putting
\begin{eqnarray*}
Y^{t,x}_s=Y^{t,x}_t,\, Z^{t,x}_s=0,\,\,U^{t,x}_s=0,\,\, V^{t,x}_s=0,\;\; s\in [0,t].
\end{eqnarray*}
The following regularity result generalizes the Kolmogorov continuity criterion to BDSGVI:
\begin{proposition}\label{Prop}
Let the ordered triplet $(Y^{t,x}_s, U^{t,x}_s,V^{t,x}_s,
Z^{t,x}_s)$ be the unique solution of the BDSGVI \eqref{eqmarkov1}.
Then, the random field $(s, t, x)\mapsto Y^{t,x}_s,\; (s, t,
x)\in[0, T ]\times[0, T ]\times\overline{\Theta}$, is a.s.
continuous.
\end{proposition}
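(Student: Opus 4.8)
The plan is to deduce the claim from the Kolmogorov--Chentsov continuity criterion applied in the Banach space $C([0,T];\R)$. Since for every fixed $(t,x)$ the path $s\mapsto Y^{t,x}_s$ is continuous by the very construction of the solution of \eqref{eqmarkov1}, it is enough to produce a version of the $C([0,T];\R)$-valued random field $(t,x)\mapsto Y^{t,x}_{\cdot}$ that is a.s.\ continuous for the uniform norm; combined with continuity in $s$ this yields a.s.\ joint continuity of $(s,t,x)\mapsto Y^{t,x}_s$. By the Banach-space version of Kolmogorov's criterion, such a version exists once we establish, for $p$ large enough (namely $p>2(d+1)$), a constant $C$ with
\[
\E\Big[\sup_{0\le s\le T}\big|Y^{t,x}_s-Y^{t',x'}_s\big|^{p}\Big]\le C\big(|t-t'|^{p/2}+|x-x'|^{p}\big),
\]
uniformly in $(t,x),(t',x')\in[0,T]\times\overline{\Theta}$. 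I would prove this in two steps, reusing the a priori machinery of Lemmas \ref{lemma1} and \ref{lemma3}.

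\emph{Step 1 (uniform moments).} First I would show $\sup_{(t,x)}\E\big[\sup_{0\le s\le T}|Y^{t,x}_s|^{p}+\big(\int_0^T\|Z^{t,x}_r\|^2\,dr\big)^{p/2}\big]<\infty$. Applying It\^o's formula to $e^{\mu A^{t,x}_s}|Y^{t,x}_s|^{2}$ and raising to the power $p/2$ (equivalently, applying It\^o directly to the weighted $|Y^{t,x}_s|^{p}$), the terms $\int U^{t,x}\,dr$ and $\int V^{t,x}\,dA^{t,x}$ drop out because $\langle Y^{t,x}_r,U^{t,x}_r\rangle\ge 0$ and $\langle Y^{t,x}_r,V^{t,x}_r\rangle\ge 0$, which hold since $0\in\partial\varphi(0)\cap\partial\psi(0)$ by (H2)(ii). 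The linear growth of $f,g$ from (H5) and of $h$ from (H7), Young's inequality to absorb the resulting $\|Z^{t,x}\|^2$ contributions, the Burkholder--Davis--Gundy inequality for the $dW$- and $dB$-martingales, the bound $|\chi(X^{t,x}_T)|\le K(1+|X^{t,x}_T|)$ with standard moment bounds on $X^{t,x}$, the moment and exponential-moment estimates \eqref{bound1}--\eqref{bound2} on $A^{t,x}$ (to control both the $dA^{t,x}$-integrals and the weight $e^{\mu A^{t,x}_T}$), and finally Gronwall's lemma give the claim. In particular this bounds $V^{t,x}$ in $\bar{\mathcal M}_1^{\lambda,\mu}$ uniformly in $(t,x)$, which is needed below.

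\emph{Step 2 (increments).} Fix $(t,x),(t',x')$ with $t\le t'$ and write $\Delta Y=Y^{t,x}-Y^{t',x'}$, $\Delta Z=Z^{t,x}-Z^{t',x'}$, etc. On $[0,t]$ one has $\Delta Y_s=\Delta Y_t$ by the extension convention, and on $[t,t']$ the increments $|Y^{t,x}_s-Y^{t,x}_t|$ and $|Y^{t',x'}_s-Y^{t',x'}_{t'}|$ are controlled by Step 1 localized to a short interval, producing a contribution of order $|t-t'|^{p/2}$; thus the issue is the estimate on $[t',T]$, where both equations are active. There I would apply It\^o's formula to $e^{\lambda s+\mu(A^{t,x}_s+A^{t',x'}_s)}|\Delta Y_s|^{2}$. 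Monotonicity of $\partial\varphi$ kills $\langle\Delta Y_r,U^{t,x}_r-U^{t',x'}_r\rangle\,dr\ge 0$, and for the $\psi$-part one splits $V^{t,x}_r\,dA^{t,x}_r-V^{t',x'}_r\,dA^{t',x'}_r=(V^{t,x}_r-V^{t',x'}_r)\,dA^{t,x}_r+V^{t',x'}_r\,d(A^{t,x}_r-A^{t',x'}_r)$, the first term being again nonnegative by monotonicity against $dA^{t,x}$. The Lipschitz assumption (H6) on $f$ (in $(y,z)$), the monotonicity/Lipschitz structure of $g$ and $h$, the $\|h\|^{2}$-term, the $dW$- and $dB$-martingales (via BDG), and the terminal term $|\chi(X^{t,x}_T)-\chi(X^{t',x'}_T)|$ are then absorbed using the continuity estimate \eqref{continuity1} for $X^{t,x}-X^{t',x'}$ exactly as in Lemma \ref{lemma3}; after choosing $\lambda,\mu$ large enough as there, backward Gronwall's lemma plus BDG close the loop.

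The genuinely delicate point --- and the main obstacle --- is the leftover term $\int_{t'}^{\cdot}V^{t',x'}_r\,d(A^{t,x}_r-A^{t',x'}_r)$ (and, similarly, $\int g(r,X^{t',x'}_r,Y^{t',x'}_r)\,d(A^{t,x}_r-A^{t',x'}_r)$): the two solutions are driven by \emph{different} increasing processes, and $A^{t,x}-A^{t',x'}$ is not monotone, so these terms cannot be dominated by an increasing control. I would handle them by invoking the It\^o representation $A^{t,x}_s=\phi(X^{t,x}_s)-\phi(x)-\int_t^{t\vee s}L\phi(X^{t,x}_r)\,dr-\int_t^{t\vee s}\nabla\phi(X^{t,x}_r)\,\sigma(X^{t,x}_r)\,dW_r$ derived in the proof of Proposition \ref{P:continuity00}, which rewrites $d(A^{t,x}_r-A^{t',x'}_r)$ through differences of $X^{t,x}$ and $X^{t',x'}$ and a stochastic integral; combining this with the $C^{2}_b$-regularity of $\phi$, the estimate \eqref{continuity1}, the uniform $\bar{\mathcal M}$-bound on $V^{t',x'}$ from Step 1, Cauchy--Schwarz and Young, and BDG yields a contribution bounded by $C(|t-t'|^{p/2}+|x-x'|^{p})$ plus a term absorbable into the left-hand side. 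One mild technical wrinkle: (H6) gives Lipschitz continuity of $f$ in $(y,z)$ only, so to get the full H\"older rate in the $x$-variable one also uses local Lipschitz continuity of $f,g,\chi$ in $x$ in the spirit of \cite{PZ}; if only joint continuity in $x$ is available, the same scheme with a uniform-continuity argument in place of the H\"older bound still delivers an a.s.\ continuous (though not necessarily H\"older) modification, which is all that Proposition \ref{Prop} asserts.
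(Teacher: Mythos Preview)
Your approach coincides with the paper's: both derive a high-moment bound on $\sup_s|Y^{t,x}_s-Y^{t',x'}_s|$ via It\^o's formula and then invoke Kolmogorov's criterion in $C([0,T];\R)$. The paper applies It\^o directly to $|Y^{t,x}_s-Y^{t',x'}_s|^p$ with $p>4$, cites \cite{BM} and \cite{Bal} for the detailed computations, and arrives at
\[
\E\Big[\sup_{0\le s\le T}|Y^{t,x}_s-Y^{t',x'}_s|^p\Big]\le C\Big[\E\sup_s|X^{t,x}_s-X^{t',x'}_s|^p+\big(\E\sup_s|A^{t,x}_s-A^{t',x'}_s|^p\big)^{1/2}\Big],
\]
which via Proposition \ref{P:continuity00} yields a bound of order $|t-t'|^{p/4}+|x-x'|^{p/2}$ rather than the $|t-t'|^{p/2}+|x-x'|^{p}$ you target. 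The square root is precisely the trace of a Cauchy--Schwarz step used to dispatch the $d(A^{t,x}-A^{t',x'})$ contributions you flag as delicate; the paper does not decompose them through the It\^o representation of $A$ as you propose. Your route through $A^{t,x}_s=\phi(X^{t,x}_s)-\phi(x)-\int L\phi\,dr-\int\nabla\phi\,\sigma\,dW$ is more elaborate and might recover the sharper exponent, but it is not needed: the paper's weaker estimate already suffices for Kolmogorov once $p$ is taken roughly twice as large. In short, your Step~2 is more explicit about where the difficulty sits than the paper's proof, which outsources the calculation to the references, but the skeleton---It\^o on increments, monotonicity of $\partial\varphi,\partial\psi$ to drop the $U,V$ terms, BDG, Gronwall, then Kolmogorov---is the same.
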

\begin{proof}
Let $(t,x)$ and $(t',x')$ be two elements of $[0,T]\times \overline{\Theta}$. It follows from Itô formula applied to $|Y^{t,x}_s-Y^{t',x'}_s|^p$ with $p>4$ combined with the arguments used in \cite{BM} (see Proposition 3.5) and \cite{Bal} (see Proposition 4.3) that, for $0\leq s\leq T$,
\begin{eqnarray*}
\E\left(\sup_{0\leq s\leq T}\left\vert
Y^{t,x}_{s}-Y^{t',x'}_{s}\right\vert^p\right)&\leq& C\left[\E\left(\sup_{0\leq s\leq T}\left|X^{t,x}_s-X^{t',x'}_s\right|^p\right)+\left(\E\sup_{0\leq s\leq T}|A^{t,x}_s-A_s^{t',x'}|^p\right)^{1/2}\right].
\end{eqnarray*}
Next, using Proposition 5.1 one can derive
\begin{eqnarray*}
\E\left(\sup_{0\leq s\leq T}\left\vert
Y^{t,x}_{s}-Y^{t',x'}_{s}\right\vert^p\right)\leq C(|t-t'|^{p/2}+|x-x'|^{p}+|t-t'|^{p/4}+|x-x'|^{p/2}).
\end{eqnarray*}
Therefore, il suffice to choose $p=\gamma$ convenably to get
\begin{eqnarray*}
\E\left(\sup_{0\leq s\leq T}\left\vert
Y^{t,x}_{s}-Y^{t',x'}_{s}\right\vert^{\gamma}\right)\leq C(|t-t'|^{1+\beta}+|x-x'|^{d+\delta}).
\end{eqnarray*}
We conclude from the last estimate, using Kolmogorov's lemma, that $\displaystyle{\{Y_s^{t,x}, s,t\in[0,T], x\in\overline{\Theta}\}}$
has an a.s. continuous version.
\end{proof}
Let us define
\begin{eqnarray}
u(t,x)=Y^{t,x}_t
\end{eqnarray}
which is random field such that $u(t,x)$ is $\mathcal{F}^{B}_{t,T}$-measurable for each $(t,x)\in[0,T]\times\overline{\Theta}$.

We are now ready to derive the main result of this section.
\begin{theorem}
Let the assumptions $({H1})$--$({H7})$ be satisfied. Then,
the function $u(t, x)$ defined above is a stochastic viscosity
solution of $\mathcal{SPVI}^{(f,g,h,\chi,\varphi,\psi)}$
\eqref{SPVI}.
\end{theorem}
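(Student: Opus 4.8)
The plan is to follow the strategy of Buckdahn and Ma, transporting $\mathcal{SPVI}^{(f,g,h,\chi,\varphi,\psi)}$ \eqref{SPVI} to an $\omega$-wise parabolic variational inequality with random coefficients by means of the Doss--Sussmann transformation, solving that problem with the existence result of Maticiuc and R\u{a}\c{s}canu \cite{MR}, and then transferring the viscosity property back to $u$ through Corollary \ref{corollary4.8}.

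First I would record the elementary properties of $u(t,x)=Y^{t,x}_t$ demanded by Definition \ref{defvisco}. Proposition \ref{Prop} gives that $(t,x)\mapsto u(t,x)$ admits an a.s.\ continuous version which is ${\bf F}^B$-adapted in the appropriate sense, so $u\in\mathcal{C}({\bf F}^B,[0,T]\times\overline{\Theta})$, and the terminal condition in \eqref{eqmarkov1} yields $u(T,x)=\chi(x)$. Condition (4) in Definition \ref{def2.1} for the solution of \eqref{eqmarkov1}--\eqref{eqmarkov2} forces $Y^{t,x}_t\in{\rm Dom}(\partial\varphi)\subseteq{\rm Dom}(\varphi)$ for a.e.\ $t$ (and analogously into ${\rm Dom}(\psi)$ with respect to $dA$); combined with the continuity of $u$ and a passage to the limit along a $(\tau,\xi)$-approximating sequence, this gives $u(\tau,\xi)\in{\rm Dom}(\varphi)$ for $(\tau,\xi)\in\mathcal{M}^B_{0,T}\times L^2(\mathcal{F}^B_\tau;\Theta)$ and $u(\tau,\xi)\in{\rm Dom}(\psi)$ when $\xi\in\partial\Theta$.

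The substantive step is the flow (Markov) property: for $t\le s\le T$ one has $Y^{t,x}_s=Y^{s,X^{t,x}_s}_s$, $\overline{\mathbb{P}}$-a.s., hence $u(s,X^{t,x}_s)=Y^{t,x}_s$. This follows by uniqueness in Theorem \ref{thm3.1} applied on $[s,T]$ together with the flow property of the reflected diffusion $(X^{t,x},A^{t,x})$, since by Definition \ref{def2.1} the restriction of $(Y^{t,x},U^{t,x},V^{t,x},Z^{t,x})$ to $[s,T]$ solves exactly the BDSGVI driven by $(X^{s,X^{t,x}_s}_\cdot,A^{s,X^{t,x}_s}_\cdot)$ with terminal value $\chi(X^{t,x}_T)$. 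Neither the two subdifferential terms nor the backward Kunita--It\^o integral obstruct this, but I expect making the measurability bookkeeping between $(\Omega,\mathcal{F},\mathbb{P})$ and $(\Omega',\mathcal{F}',\mathbb{P}')$ precise, and checking that the increasing process $A^{t,x}$ is reproduced correctly under the shift, to be the main technical obstacle.

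Next I would set $v(t,x)=\varepsilon(t,x,u(t,x))$. Using the Stratonovich flow \eqref{p1} and It\^o--Stratonovich calculus, $v$ solves, for $\mathbb{P}$-a.e.\ fixed $\omega$, a generalized backward stochastic variational inequality with the operators $\partial\varphi,\partial\psi$, a Stieltjes term in $dA^{t,x}$, and coefficients $\widetilde{f},\widetilde{g}$ of the form exhibited in the proof of Corollary \ref{corollary4.8}; since $D_y\eta>0$ and $\eta$ inherits from (H7) the smoothness needed to make sense of the formula for $\widetilde{f}$, one checks routinely that $\widetilde{f},\widetilde{g}$ inherit from (H1)--(H7) the monotonicity, Lipschitz and compatibility hypotheses required in \cite{MR}. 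Hence, for $\mathbb{P}$-a.e.\ $\omega$, the function $v(\cdot,\cdot)$ is a deterministic viscosity solution of the corresponding PVI with the mixed nonlinear Neumann--Dirichlet boundary condition, i.e.\ it satisfies \eqref{E:def2}--\eqref{E:viscosity02} and \eqref{E:def02}--\eqref{E:viscosity002} at every $(\tau,\xi)$. Finally, invoking Corollary \ref{corollary4.8} in the reverse direction with $u=\eta(\cdot,\cdot,v)$ converts these inequalities back into \eqref{E:def1}, \eqref{E:viscosity01}, \eqref{E:def01} and \eqref{E:viscosity001} for every triplet in the respective stochastic $h$-jets $\mathcal{J}^{1,2,\pm}_h u(\tau,\xi)$; together with the regularity, terminal and domain properties from the first step, this is exactly Definition \ref{defvisco}, so $u$ is a stochastic viscosity solution of $\mathcal{SPVI}^{(f,g,h,\chi,\varphi,\psi)}$ \eqref{SPVI}.
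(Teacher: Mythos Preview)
Your high-level plan --- pass to $v=\varepsilon(\cdot,\cdot,u)$ via Doss--Sussmann and then argue $\omega$-wise --- matches the paper's, but the key step in your argument does not go through. After the transformation, the multivalued terms in the equation for $v$ are not $\partial\varphi(v)$ and $\partial\psi(v)$: Corollary \ref{corollary4.8} shows they become
\[
\frac{\varphi'_l\bigl(\eta(\tau,\xi,v(\tau,\xi))\bigr)}{D_y\eta(\tau,\xi,v(\tau,\xi))}
\quad\text{and}\quad
\frac{\psi'_l\bigl(\eta(\tau,\xi,v(\tau,\xi))\bigr)}{D_y\eta(\tau,\xi,v(\tau,\xi))},
\]
i.e.\ $(t,x)$-dependent, nonlinearly transformed versions of the original subdifferentials. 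These are \emph{not} the subdifferentials of fixed convex functions of $v$, so the PVI for $v$ is not of the type treated in \cite{MR}, and you cannot simply invoke that existence/viscosity result. The paper says this explicitly just before Corollary \ref{corollary4.8}: ``our resulting equation \ldots is not necessarily the PVI studied by Maticiuc and R\u{a}\c{s}canu in \cite{MR}''. Your claim that ``one checks routinely that $\widetilde f,\widetilde g$ inherit \ldots the hypotheses required in \cite{MR}'' misses that the obstruction lies in the multivalued part, not in $\widetilde f,\widetilde g$.

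The paper circumvents this by Yosida approximation. For each $\delta>0$ the approximating GBDSDE \eqref{eqmarkovapp} has Lipschitz drift (no genuine subdifferential), so after the Doss--Sussmann transform $v^\delta=\varepsilon(\cdot,\cdot,u^\delta)$ is an $\omega$-wise viscosity solution of a \emph{standard} SPDE with Neumann condition, by the result of \cite{Bal}. One then shows $v^\delta\to v$ locally uniformly and uses the Crandall--Ishii--Lions stability machinery to produce, for each $(a_v,p_v,X_v)\in\mathcal J^{1,2,+}_0 v(\tau,\xi)$, approximating jets $(a^n_v,p^n_v,X^n_v)$ at nearby points. The subdifferential inequality $\nabla\varphi_{\delta_n}(\cdot)(\cdot - y)\ge \varphi(J_{\delta_n}(\cdot))-\varphi(y)$ for $y\le u(\tau,\xi)$ survives the limit and yields the left derivative $\varphi'_l$. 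This approximation-plus-stability step is the missing idea in your proposal; the Markov/flow property you develop is not used in the paper's argument and does not substitute for it.
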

\begin{proof}
First, since $u(t,x)=Y^{t,x}_t$, it follows from Proposition 5.2 that $u\in C(\mathcal{F}^{B},[0,T]\times \overline{\Theta})$. Thus it remains to show that $u$ is the stochastic viscosity solution to $\mathcal{SPVI}^{(f,g,h,\chi,\varphi,\psi)}$. In other word, using
Corollary 4.8, it suffices to prove that $v(t, x) =\varepsilon(t, x, u(t, x))$ satisfies \eqref{E:def2}--\eqref{E:viscosity02} and
\eqref{E:def02}--\eqref{E:viscosity002}. In this fact, we are going to use the Yosida approximation of \eqref{eqmarkov1},
which was studied in Section 3.  For each $(t,x)\in[0,T]\times\overline{\Theta},\; \delta>0$, let $\{(Y^{t,x,\delta}_{s},Z^{t,x,\delta}_{s}),\,\ 0\leq s\leq T\}$ denote the solution of the following GBDSDE:
\begin{eqnarray}
&&Y^{t,x,\delta}_s+\int_s^T\nabla\varphi_{\delta}(Y^{t,x,\delta}_r)\,{\rm
d}r
+\int_s^T\nabla\psi_{\delta}(Y^{t,x,\delta}_r)\,{\rm
d}A_r\nonumber\\
&=&\chi(X^{t,x}_T)+\int_s^Tf(r,X^{t,x}_r,Y^{t,x,\delta}_r,Z^{t,x,\delta}_r)\,{\rm
d}r+\int_s^Tg(r,X^{t,x}_r,Y^{t,x,\delta}_r)\,{\rm
d}A_r\nonumber\\
&&+\int_s^T h(r,X^{t,x}_r,Y^{t,x,\delta}_r) \,{\rm
d}B_r-\int_s^TZ^{t,x,\delta}_r\,{\rm d}W_r,\ t\leq s \leq
T.\label{eqmarkovapp}
\end{eqnarray}

Define $Y^{t,x,\delta}_t=u^{\delta}(t,x)$,  it is well known (see
Theorem 4.7, \cite{Bal}) that the function $v^{\delta}(t, x) = \varepsilon(t,
x, u^{\delta}(t, x))$ is an $\omega$-wise viscosity solution to the
following SPDE with nonlinear Dirichlet-Neumann boundary condition
\begin{eqnarray}
\left\{
\begin{array}{l}
{\rm(i)}\;\displaystyle\left(\frac{\partial v^{\delta}}{\partial
t}(t,x)-\left[
Lv^{\delta}(t,x)+\widetilde{f}_{\delta}(t,x,v^{\delta}(t,x),\sigma^{*}(x)\nabla
v^{\delta}(t,x))\right]\right)=0,\,\,\
(t,x)\in[0,T]\times\Theta,\\\\
{\rm(ii)}\;\displaystyle\frac{\partial v^{\delta}}{\partial
n}(t,x)+\widetilde{g}_{\delta}(t,x,v^{\delta}(t,x))=0,\,\,\ (t,x)\in[0,T]\times\partial\Theta, \\\\
{\rm(iii)}\; v(T,x)=\chi(x),\,\,\,\,\,\,\ x\in\overline{\Theta},
\end{array}\right.\label{SPDE}
\end{eqnarray}
where
$$\widetilde{f}_{\delta}(t,x,y,z)=\widetilde{f}(t,x,y,z)-\frac{\nabla\varphi_{\delta}(\eta(t,x,y))}{D_y\eta(t,x,y)}\;\;\;\mbox{and}
\;\;\;\widetilde{g}_{\delta}(t,x,y)=\widetilde{g}(t,x,y)-\frac{\nabla\psi_{\delta}(\eta(t,x,y))}{D_y\eta(t,x,y)}.$$
However from the results of the previous section, one can proved with no more difficulties (it suffice to show that
$\displaystyle{\E\left[\sup_{t\leq s\leq T}\sup_{x\in\overline{\Theta}}|,Y^{t,x,\varepsilon}_s-Y^{t,x,\delta}_s|^2\right]\leq C(\varepsilon+\delta)}$) that, for each $(t, x)\in[0, T ]\time\Theta$, along a subsequence, $v^\delta(t,x)$ converge to $v(t,x)$ almost surely as $\delta $ goes to 0. Moreover, since $v^\delta$ and $v$ are continuous, it follows from Dini's theorem that the above convergence is uniform on $t$ on compact.

Let $\omega\in\Omega$ be fixed such
\begin{eqnarray*}
|v^{\delta}(\tau(\omega),\xi(\omega))-v(\tau(\omega),\xi(\omega))|\rightarrow 0\;\;\; \mbox{as}\;\;\; \delta\rightarrow 0,
\end{eqnarray*}
and consider $(a_v,p_v,X_v)\in\mathcal{J}^{1,2,+}_{0}(v(\tau(\omega),\xi(\omega)))$. Then, it follows from Crandall-
Ishii-Lions \cite{CIL} that there exist sequences
\begin{eqnarray*}
\left\{
\begin{array}{ll}
\delta_n(\omega)\searrow 0,\\\\
(\tau_n(\omega),\xi_n(\omega))\in[0,T]\times\overline{\Theta},\\\\
(a_v^{n},p_v^{n},X_v^{n})\in\mathcal{J}^{1,2,+}_{0}(v^{\delta_n}(\tau_n(\omega),\xi_n(\omega)))
\end{array}
\right.
\end{eqnarray*}
such that
\begin{eqnarray*}
(\tau_n(\omega),\xi_n(\omega),a_v^{n},p_v^{n},X_v^{n},v^{\delta_n}(\tau_n(\omega),
\xi_n(\omega)))\rightarrow
(\tau(\omega),\xi(\omega),a_v,p_v,X_v,v(\tau(\omega),\xi(\omega))), \
\mbox{as} \ n\to \infty.
\end{eqnarray*}
Since $v^{\delta_n}(\omega,\cdot,\cdot)$ is a (deterministic) viscosity
solution to the PDE
$(\widetilde{f}_{\delta_n}(\omega,\cdot,\cdot,\cdot),0,\widetilde{g}_{\delta_n}(\omega,\cdot,\cdot),\chi)$,
we obtain
\begin{itemize}
\item [(a)] $(\tau_n(\omega),\xi_n(\omega))\in [0,T]\times\Theta$
\begin{eqnarray}
V_{\widetilde{f}_{\delta_n}(\omega)}(\tau_n(\omega),\xi_n(\omega),a^n_v,X^n_v,p^n_v)+\frac{\nabla\varphi_{\delta_n}(\eta(\tau_n(\omega),\xi_n(\omega),v^{\delta_n}
(\tau_n(\omega),\xi_n(\omega))))}
{D_y\eta(\tau_n(\omega),\xi_n(\omega),v^{\delta_n}(\tau_n(\omega),\xi_n(\omega)))}\leq 0,\label{V01}
\end{eqnarray}
\item [(b)]$(\tau_n(\omega),\xi_n(\omega))\in [0,T]\times\partial\Theta$

$\displaystyle\min\Bigg(V_{\widetilde{f}_{\delta_n}(\omega)}(\tau_n(\omega),\xi_n(\omega),a^n_v,X^n_v,p^n_v)+\frac{\nabla\varphi_{\delta_n}(\eta(\tau_n(\omega),
\xi_n(\omega),v^{\delta_n}(\tau_n(\omega),\xi_n(\omega))))}{D_y\eta(\tau_n(\omega),\xi_n(\omega),v^{\delta_n}
(\tau_n(\omega),\xi_n(\omega)))},$
\begin{eqnarray}
&&\langle\nabla\phi(\xi_n),D_x\eta(\tau_n(\omega),\xi_n(\omega),v^{\delta_n}(\tau_n(\omega),\xi_n(\omega))p^n_v)\rangle
-\widetilde{g}_{\delta_n}(\omega)(\tau_n(\omega),\xi_n(\omega),v^{\delta_n}(\tau_n(\omega),\xi_n(\omega)))\nonumber\\\nonumber\\
&&~~~~~~~~~~~~~~~~~~~~~~~~~~~~~~~+\frac{\nabla\psi_{\delta^n}(\eta(\tau_n(\omega),\xi_n(\omega)
,v^{\delta_n}(\tau_n(\omega),\xi_n(\omega))))}
{D_y\eta(\tau_n(\omega),\xi_n(\omega),v^{\delta_n}(\tau_n(\omega),\xi_n(\omega)))}\Bigg)\leq
0\label{V02}.
\end{eqnarray}
\end{itemize}
To simplify the notation, we remove the dependence of $\omega$ in
the sequel. Let $y\in {\rm Dom}(\varphi)\cap {\rm Dom}(\psi)$ such
that $y\leq u(\tau,\xi)=\eta(\tau,\xi,v(\tau,\xi))$. The ucp
convergence $v^{\delta_n}\rightarrow v$ implies that there exists
$n_0 > 0$ such that  $y<\eta(\tau,\xi,v(\tau,\xi)),\;\; \forall\,
n\geq n_0$. Therefore, from \eqref{V01} and \eqref{V02}, it follows
that

$\displaystyle
(\eta(\tau,\xi,v(\tau,\xi))-y)V_{\widetilde{f}}(\tau_n,\xi_n,a_v,X_v,p_v)$
$$\leq
\left[-\varphi_{\delta}(J_{\delta_n}(\eta(\tau,\xi,v^{\delta_n}(\tau,\xi))))+\varphi(y)\right]\frac{1}
{D_y\eta(\tau_n,\xi_n,v^{\delta_n}(\tau_n,\xi_n))},\label{V1}$$ and

$\displaystyle
\min\Bigg((\eta(\tau,\xi,v(\tau,\xi))-y)V_{\widetilde{f}}(\tau_n,\xi_n,a_v,X_v,p_v)
+\frac{\varphi_{\delta}(J_{\delta}(\eta(\tau,
\xi,v^{\delta_n}(\tau,\xi))))-\varphi(y)}{D_y\eta(\tau,\xi,v^{\delta_n}
(\tau,\xi))},$
\begin{eqnarray*}
&&\left(\eta(\tau,\xi,v(\tau,\xi))-y\right)\left
[\langle\nabla\phi(\xi),D_x\eta(\tau_n(\omega),\xi_n,v^{\delta_n}(\tau,\xi_n)p^n_v)\rangle
-\widetilde{g}(\tau_n,\xi_n,v^{\delta_n}(\tau_n,\xi_n))\right]\nonumber\\\nonumber\\
&&~~~~~~~~~~~~~~~~~~~~~~~~~~~~~~~+\frac{\psi_{\delta}(\bar{J}_{\delta}(\eta(\tau,
\xi,v^{\delta_n}(\tau,\xi))))-\psi(y)}{D_y\eta(\tau,\xi,v^{\delta_n}
(\tau,\xi))}\Bigg)\leq 0\label{V2}.
\end{eqnarray*}
Taking the limit in this last inequality, for all
$y\leq\eta(\tau,\xi,v(\tau,\xi))$, we get
$$(V_{\widetilde{f}}(\tau,\xi,a_v,X_v,p_v)\leq -\frac{\varphi(\eta(\tau,\xi,v(\tau,\xi)))-\varphi(y)}{\eta(\tau,\xi,v(\tau,\xi))-y)}\frac{1}
{D_y\eta(\tau,\xi,v(\tau,\xi))},\label{V1}
$$
and

$\displaystyle\min\Bigg(V_{\widetilde{f}}(\tau,\xi,a_v,X_v,p_v)+\frac{\varphi(\eta(\tau,
\xi,v(\tau,\xi)))-\varphi(y)}{(\eta(\tau,\xi,v(\tau,\xi))-y)D_y\eta(\tau,\xi,v(\tau,\xi))},$
\begin{eqnarray*}
&&\left[\langle\nabla\phi(\xi),D_x\eta(\tau,\xi,v(\tau,\xi)p_v)\rangle
-\widetilde{g}(\tau,\xi,v(\tau,\xi))\right]+\frac{\psi(\eta(\tau(\omega),
\xi,v(\tau,\xi)))-\psi(y)}{(\eta(\tau,\xi,v(\tau,\xi))-y)D_y\eta(\tau(\omega),\xi,v
(\tau,\xi))}\Bigg)\leq 0\label{V2},
\end{eqnarray*}
which implies that

\begin{eqnarray*}
&&(V_{\widetilde{f}}(\tau,\xi,a_v,X_v,p_v)+\frac{\varphi'_l(\eta(\tau,\xi,v(\tau,\xi)))}
{D_y\eta(\tau,\xi,v(\tau,\xi))}\leq 0,\label{V1}
\end{eqnarray*}
and

$\displaystyle\min\Bigg(V_{\widetilde{f}}(\tau,\xi,a_v,X_v,p_v)+\frac{\varphi'_l(\eta(\tau,
\xi,v(\tau,\xi)))}{D_y\eta(\tau,\xi,v(\tau,\xi))},$
\begin{eqnarray*}
 &&\langle\nabla\phi(\xi),D_x\eta(\tau,\xi,v(\tau,\xi)p_v)\rangle
-\widetilde{g}(\tau,\xi,v(\tau,\xi))+\frac{\psi'_l(\eta(\tau,
\xi,v(\tau,\xi)))}{D_y\eta(\tau,\xi,v (\tau,\xi))}\Bigg)\leq
0\label{V2},
\end{eqnarray*}
and yields that $v$ satisfies \eqref{E:def2} and
\eqref{E:viscosity02}. Then, it follows from Corollary
\ref{corollary4.8} that $u$ is a stochastic viscosity subsolution of
$\mathcal{SPVI}^{(f,g,h,\chi,\varphi,\psi)}$ \eqref{SPVI}. By
similar arguments, one can prove that $u$ is a stochastic viscosity
supersolution of $\mathcal{SPVI}^{(f,g,h,\chi,\varphi,\psi)}$
\eqref{SPVI} and completes the proof.
\end{proof}
{\textbf {Acknowledgements}} The authors wish to thank the two anonymous
referees and the associate editor for their valuable comments, correcting errors and improving written language.

\label{lastpage-01}
\end{document}